\documentclass[12pt]{amsart}
\usepackage[T1]{fontenc}
\usepackage[a4paper,margin=1in]{geometry}
\usepackage{graphicx}
\usepackage{amssymb}
\usepackage{stmaryrd}
\usepackage{enumitem}
\usepackage{tikz}
\usetikzlibrary{arrows.meta,calc}
\usepackage{subcaption}

\usepackage{float}
\usepackage{url}
\usepackage{mathtools}
\usepackage{amsmath,amsfonts,amsthm}
\usepackage{mathrsfs} 
\newcommand{\case}[1]{\paragraph*{Case #1:}}

\newtheorem{theorem}{Theorem}[section]
\newtheorem{lemma}[theorem]{Lemma}
\newtheorem{proposition}[theorem]{Proposition}
\newtheorem{corollary}[theorem]{Corollary}
\newtheorem{definition}{Definition}
\newtheorem{example}{Example}[section]

\usepackage{multirow}

\newcommand{\QFR}{\mathrm{QFR}}
\newcommand{\PST}{\mathrm{PST}}

\usepackage[colorlinks,
linkcolor=blue,
anchorcolor=blue,
citecolor=blue
]{hyperref}

\author{D. Abdullah}
\address{\textbf{Duaa Abdullah} Department of Discrete Mathematics, Moscow Institute of Physics and Technology}
\email{abdulla.d@phystech.edu}
\thanks{ }

\title[Quantum Fractional Revival]{Quantum Fractional Revival and Entanglement Entropy in Unitary Cayley Graphs}

\date{}
\begin{document}

\begin{abstract}
This paper extends the theory of quantum fractional revival (QFR) on unitary Cayley graphs
$X=(V(\mathbb{Z}_n),E(S))$ in several directions that remained unresolved in previous work.
First, we investigate QFR with respect to the Laplacian matrix Hamiltonian in addition to the
adjacency matrix Hamiltonian. In particular, we prove that for regular graphs the two models
differ only by a global phase factor, and we determine the conditions under which the Laplacian
framework independently admits QFR.
Second, for unitary Cayley graphs of order $n=2p$, where $p$ is an odd prime, we derive an
explicit closed-form expression for the minimum revival time,
$t^{*}=\frac{2\pi}{p},$
and show that the associated revival amplitudes are given by
\[
\alpha=\cos\!\left(\frac{2\pi}{p}\right), \qquad
\beta=-i\sin\!\left(\frac{2\pi}{p}\right).
\]
Third, we provide a complete characterization of strongly cospectral vertex pairs in
$X=(V(\mathbb{Z}_n),E(S))$ through the arithmetic structure of $\mathbb{Z}_n$, establishing that
strong cospectrality is equivalent to antipodality whenever $n$ is twice a prime.
Finally, we compute the von Neumann entanglement entropy generated by QFR for all admissible
graphs, thereby obtaining a collection of quantum information measures and proving that the
entropy depends solely on the revival amplitudes $|\alpha|$ and $|\beta|$.

\end{abstract}
\maketitle

\noindent\rule{15.9cm}{1.0pt}

\noindent
\textbf{
Keywords:} Unitary Cayley graph, quantum fractional revival, Laplacian Hamiltonian,
revival time, Ramanujan sum, von Neumann entropy, quantum state transfer, strongly cospectral
vertices.

\medskip

\noindent
{\bf
MSC 2020:} 05C50, 81P45, 11L05\\[0.3em]

\noindent\rule{15.9cm}{1.0pt}

\section{Introduction}
Quantum fractional revival $\QFR$ in graph networks is a central topic in the theory of
continuous-time quantum walks (CTQWs)~\cite{genest2016,bernard2018}, with direct applications to quantum communication,
quantum entanglement generation, and the design of quantum spin chains. The foundational
work of Soni, Choudhary, and Singh~\cite{soni2025original} established that $\QFR$ exists in
 $X = (V(\mathbb{Z}_n), E(S))$ where it holds if and only if $n=2k$ and $k\in \mathbb{N}$. Using the adjacency matrix as the Hamiltonian. That paper characterized QFR in terms of
the spectral theory of the adjacency matrix and the Ramanujan sum function~\cite{chan2019, chan2022fundamentals, chan2021laplacian}.

Despite these achievements, several fundamental questions remained open. The paper relied
exclusively on the adjacency matrix as the Hamiltonian, leaving the Laplacian setting
entirely unexplored. No closed-form formula was given for the revival time $t^*$ as a function
of $n$. The characterization of strongly cospectral vertex pairs was incomplete beyond the
antipodal case. Therefore, no quantum information metrics such as entanglement entropy were
computed, leaving the communication utility of the graphs unquantified (furthermore, see~\cite{nielsen2000, saxena2007, so2006}).

\subsection{Preliminaries}
For any simple graph $G = (V, E)$ with $n$ vertices and $m$ edges, the concept of \emph{adjacency matrix} refer to matrix $A$ for $\{u,v\} \in E$ where $A_{uv} = 1$  and $0$ otherwise. Define \emph{Laplacian matrix} as
$L = D - A$, where the degree matrix is $D = \mathrm{diag}(d(v_1), \ldots, d(v_n))$.   
For a $k$-regular graph, $D = kI$ and hence $L = kI - A$.

The \emph{evolution operator} of a continuous-time quantum walk driven by a Hamiltonian $H$
is:
\begin{equation}\label{eq:evolution}
  U_H(t) =\mu^{-iHt}.
\end{equation}

\begin{definition}\label{def:qfr}
  A graph $G$ admits \emph{quantum fractional revival} (QFR) between vertices $u$ and $v$ at
  time $t > 0$ with Hamiltonian $H$ if
  \begin{equation}\label{eq:qfr_def}
    U_H(t)\, \mu_u = \alpha\, \mu_u + \beta\, \mu_v,
    \qquad |\alpha|^2 + |\beta|^2 = 1,\quad \beta \neq 0,
  \end{equation}
  where $\mu_u, \mu_v$ are standard basis vectors and $\alpha, \beta \in \mathbb{C}$. 
\end{definition}
Hence, we consider
  QFR reduces to \emph{perfect state transfer} (PST) when $\alpha = 0$, and is called
  \emph{balanced} when $|\alpha| = |\beta| = 1/\sqrt{2}$. Determine $X = (V(\mathbb{Z}_n), E(S))$ has vertex set $V(\mathbb{Z}_n)$ and
edge set $E(S)= \{u \in V(\mathbb{Z}_n) \mid  \gcd(u, n) = 1\}$, the group of units $V(\mathbb{Z}_n)$. In this case, for any vertices $\{u,v\}\subseteq V(\mathbb{Z}_n)$, we say the vertex $u$ adjacent to vertex $v$ if and only if $\gcd(d(u)-d(v), n)=1$.

Furthermore, see spectral properties of $X$ are as follows (see~\cite{chan2021pretty, chan2020, klotz2007,liu2022}).
The eigenvalues of the adjacency matrix are the Ramanujan sums
\begin{equation}\label{eq:eigenvalues}
  \lambda_\kappa=\gamma\!\left(\frac{n}{\gcd(\kappa,n)}\right)\dfrac{\varphi(n)}{\varphi\!\left(\dfrac{n}{\gcd(\kappa,n)}\right)},
\end{equation}
where $\kappa= 0, 1, \ldots, n-1,$ and  $\gamma$ is the M\"{o}bius function and $\varphi$ is Euler's totient function.
The corresponding eigenvectors are $$v_\kappa=n^{-1/2}[1, \omega_n^{\kappa}, \omega_n^{2\kappa}, \ldots, \omega_n^{(n-1)\kappa}]^T,$$
where $\omega_n =\mu^{2\pi i/n}$.
Thus, we consider a graph is $\varphi(n)$-regular, connected, and its adjacency matrix is a circulant
with integer (integral) eigenvalues.
The evolution operator under the adjacency Hamiltonian is
\begin{equation}\label{eqq01evolutionadj}
  U_A(t) = \frac{1}{n}\sum_{d=0}^{n-1} \mu^{i\lambda_\kappa t} E_\kappa,
\end{equation}
where $E_\kappa= n\, v_\kappa v_\kappa^*,$
with matrix terms
\begin{equation}\label{eqq02evolutionadj}
  \alpha_A = U_A(t)_{u,u} = \frac{1}{n}\sum_{\kappa=0}^{n-1} \mu^{i\lambda_\kappa t}, \qquad
  \beta_A  = U_A(t)_{v,u} = \frac{1}{n}\sum_{\kappa=0}^{n-1} \mu^{i\lambda_\kappa t}\,\omega_n^{(v-u)\kappa}.
\end{equation}
Figure~\ref{fig01cayley} illustration the unitary Cayley graphs (see~\cite{klotz2007, liu2022, wang2024semi, soni2025mixed}) for several vertices $n = 2, 3, 4, 5, 6$. Note that for prime $n$ the graph is complete, while for $n=4$ it is $K_{2,2}$
           (complete bipartite) and for $n=6$ it is a circulant of degree~2.

\begin{figure}[H]
\centering
\begin{tikzpicture}
\draw   (12,10)-- (12,9);
\draw   (13.676555582323173,10.160564151981237)-- (13,9);
\draw   (13,9)-- (14.343355829498881,8.994367754626577);
\draw   (14.343355829498881,8.994367754626577)-- (13.676555582323173,10.160564151981237);
\draw   (16,10)-- (16,9);
\draw   (16,9)-- (17,9);
\draw   (17,9)-- (17,10);
\draw   (17,10)-- (16,10);
\draw   (17.6984177253741,9.916197844555347)-- (18,9);
\draw   (18,9)-- (18.964549978341594,9.00370108330478);
\draw   (18.964549978341594,9.00370108330478)-- (19.25909237417877,9.922186323137675);
\draw   (19.25909237417877,9.922186323137675)-- (18.47657960759238,10.486140336214724);
\draw   (18.47657960759238,10.486140336214724)-- (17.6984177253741,9.916197844555347);
\draw   (20.549357978676127,9.783832761366323)-- (21,9);
\draw   (21,9)-- (21.90414009432368,8.998351057796079);
\draw   (21.90414009432368,8.998351057796079)-- (22.357638167323486,9.780534876958484);
\draw   (22.357638167323486,9.780534876958484)-- (21.906996145999614,10.564367638324809);
\draw   (21.906996145999614,10.564367638324809)-- (21.002856051675934,10.566016580528728);
\draw   (21.002856051675934,10.566016580528728)-- (20.549357978676127,9.783832761366323);
\begin{scriptsize}
\draw [fill=black] (12,10) circle (1.5pt);
\draw [fill=black] (12,9) circle (1.5pt);
\draw [fill=black] (13.676555582323173,10.160564151981237) circle (1.5pt);
\draw [fill=black] (13,9) circle (1.5pt);
\draw [fill=black] (14.343355829498881,8.994367754626577) circle (1.5pt);
\draw [fill=black] (16,10) circle (1.5pt);
\draw [fill=black] (16,9) circle (1.5pt);
\draw [fill=black] (17,9) circle (1.5pt);
\draw [fill=black] (17,10) circle (1.5pt);
\draw [fill=black] (17.6984177253741,9.916197844555347) circle (1.5pt);
\draw [fill=black] (18,9) circle (1.5pt);
\draw [fill=black] (18.964549978341594,9.00370108330478) circle (1.5pt);
\draw [fill=black] (19.25909237417877,9.922186323137675) circle (1.5pt);
\draw [fill=black] (18.47657960759238,10.486140336214724) circle (1.5pt);
\draw [fill=black] (20.549357978676127,9.783832761366323) circle (1.5pt);
\draw [fill=black] (21,9) circle (1.5pt);
\draw [fill=black] (21.90414009432368,8.998351057796079) circle (1.5pt);
\draw [fill=black] (22.357638167323486,9.780534876958484) circle (1.5pt);
\draw [fill=black] (21.906996145999614,10.564367638324809) circle (1.5pt);
\draw [fill=black] (21.002856051675934,10.566016580528728) circle (1.5pt);
\end{scriptsize}
\end{tikzpicture}
\caption{$X=(V(\mathbb{Z}_n),E(S))$ for $n=2,3,4,5,6$ vertices respectively.}
  \label{fig01cayley}
\end{figure}
\subsection{Problem Statement}
This paper addresses all four of these gaps.
\begin{enumerate}
    \item Prove that for regular graphs, Laplacian QFR
and adjacency $\QFR$ are related by a phase factor arising from the degree $\varphi(n)$.
\item Determine through this paper the sharp 
revival time satisfying $t^* = 2\pi/p$ for the number of vertices $n = 2p$ by considering the case  $\alpha = \cos(2\pi/p)$ and
$\beta = -i\sin(2\pi/p)$.
\item Completely characterize
strongly cospectral pairs for $n = 2p$. 
\item Compute
the von Neumann entropy and identify when perfect state transfer (PST) occurs as a degenerate
case of $\QFR$.
\end{enumerate}

\section{On Laplacian Quantum Fractional Revival}~\label{secreval}
In this section, we establish a graph $X = (V(\mathbb{Z}_n),E(S))$ is $\varphi(n)$-regular, where  $L = \varphi(n)I - A$. This gives
\begin{equation}~\label{eqq01Laplacian}
  U_L(t)=\mu^{-i\varphi(n)t}\, \overline{U_A(t)},
\end{equation}
where $\overline{U_A(t)}$ denotes the complex conjugate which equals $U_A(-t)$ by unitarity.

\subsection{PST as a Special Case and Periodicity} 
In this subsection, we provide two complementary conclusions that contextualise $\QFR$ within the hierarchy of quantum state transfer events. First, we determine when PST (the limiting situation $\alpha = 0$, $|\beta| = 1$) happens in unitary Cayley graphs. Second, we find the least period of the quantum walk, which determines the timeframe for the evolution to recur.

\begin{proposition}\label{prop001pst}
Let $X$ be an unitary Cayley graph. Then, perfect state transfer between the antipodal pair $(0,\, n/2)$ in the 
  $X$, under the adjacency Hamiltonian, occurs if and only if
  $n \in \{2, 4\}$. 
\end{proposition}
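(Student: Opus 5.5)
The plan is to reduce PST to the standard eigenvalue criterion for integral circulant graphs and then exhibit, for every even $n\ge 6$, a pair of Ramanujan-sum eigenvalues $\lambda_\kappa$ from \eqref{eq:eigenvalues} that violates it. The cases $n=2$ and $n=4$ are checked directly, and odd $n$ is excluded because the antipodal vertex $n/2$ does not exist.

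\textbf{Step 1 (the criterion).} From \eqref{eqq02evolutionadj}, PST from $0$ to $n/2$ at time $t$ means $\bigl|\tfrac1n\sum_\kappa \mu^{i\lambda_\kappa t}(-1)^\kappa\bigr|=1$. Since this is an average of unit complex numbers, equality forces every term to be equal: there is a phase $\theta$ with $\mu^{i\lambda_\kappa t}=\theta(-1)^\kappa$ for all $\kappa$. Comparing with $\kappa=0$ gives
$$\mu^{i(\lambda_0-\lambda_\kappa)t}=(-1)^\kappa \quad\text{for all }\kappa .$$
The eigenvalues are integers, so the usual $2$-adic argument applies; write $\nu_2$ for the $2$-adic valuation. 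The criterion becomes: there is an integer $s$ such that $\nu_2(\lambda_0-\lambda_\kappa)=s$ for every odd $\kappa$, and $\nu_2(\lambda_0-\lambda_\kappa)>s$ (or the difference is $0$) for every even $\kappa$.

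\textbf{Step 2 (the small cases).} For $n=2$ the graph is $K_2$, which has PST at $t=\pi/2$. For $n=4$ the graph is $C_4=K_{2,2}$, with eigenvalues $2,0,-2,0$, and the criterion holds with $s=1$ at $t=\pi/2$.

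\textbf{Step 3 (even $n\ge 6$).} Write $n=2^a m$ with $m$ odd and $\lambda_0=\varphi(n)$, which is even. In each case below I produce two indices that break the criterion.
\begin{itemize}
\item[(i)] $a=1$ and $m$ squarefree. Then $\lambda_1=\gamma(n)=\pm1$, so $s=0$. But $\lambda_2=\gamma(m)\varphi(n)/\varphi(m)=\pm1$ is odd, so $\lambda_0-\lambda_2$ is odd even though $\kappa=2$ is even. This contradicts the criterion.
\item[(ii)] $a=1$ and $m$ not squarefree. Then $\lambda_1=0$, so $s=\nu_2(\varphi(n))$. Take $\kappa=m$, which is odd with $n/\gcd=2$. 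Then $\lambda_m=-\varphi(n)$ and $\nu_2(\lambda_0-\lambda_m)=s+1$, again a contradiction.
\item[(iii)] $a\ge2$ and $m=1$ with $n\ge8$. Here $\lambda_1=0$, so $s=\nu_2(\varphi(n))$. The even index $\kappa=n/4$ also has $\lambda=0$ because $\gamma(4)=0$. So an even index has valuation exactly $s$, a contradiction.
\item[(iv)] $a\ge2$ and there is an odd prime $p\mid m$. Since $\gamma(n)=0$, we get $s=\nu_2(\varphi(n))$. Choose an even $\kappa$ with $\gcd(\kappa,n)=n/(2p)$; this is even because $a\ge2$. Then $\lambda_\kappa=\varphi(n)/(p-1)$, so
$$\lambda_0-\lambda_\kappa=\varphi(n)\,\frac{p-2}{p-1}.$$
Its valuation is $s-\nu_2(p-1)<s$, which contradicts the criterion.
\end{itemize}

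I expect the main obstacle to be this case bookkeeping: picking the right witness index $\kappa$ in each arithmetic case and checking its parity and $\gcd$. The reduction in Step 1 is routine.
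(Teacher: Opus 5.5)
Your proof is correct, and it takes a genuinely different and more complete route than the paper. The paper computes $\alpha$ and $\beta$ explicitly for $n=2$ and $n=4$. For the converse it treats only $n=2p$, and only at the single time $t^*=2\pi/p$, where it notes $\cos(2\pi/p)\neq 0$. That argument rules out PST at no other time $t$, and it says nothing about even $n$ outside the $2p$ family, such as $n=8,12,16,18$.

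You instead reduce PST at $(0,n/2)$ to the equal-phase condition $e^{i(\lambda_0-\lambda_\kappa)t}=(-1)^\kappa$ for all $\kappa$. You convert this into a $2$-adic condition on the integer differences $\lambda_0-\lambda_\kappa$, which eliminates $t$ altogether. You then give witness indices $\kappa=2$, $m$, $n/4$, $n/(2p)$ according to the $2$-part and squarefreeness of $n$. I checked each case. For example, in (iv) $\lambda_{n/(2p)}=\varphi(n)/(p-1)$ gives valuation $s-\nu_2(p-1)<s$ at an even index, and $\lambda_0-\lambda_\kappa\neq 0$ since $p\ge 3$. The four cases exhaust even $n\ge 6$.

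Your approach proves the full ``only if'' direction for every $n$ and every $t>0$. The paper's computation only supplies explicit amplitudes in the small cases. The one step worth writing out rather than calling routine is the criterion itself. From $\kappa=1$ you get $\lambda_0\neq\lambda_1$ and $t=(2k+1)\pi/(\lambda_0-\lambda_1)$. Requiring $(\lambda_0-\lambda_\kappa)(2k+1)/(\lambda_0-\lambda_1)$ to be an integer with the parity of $\kappa$ then gives exactly your valuation conditions with $s=\nu_2(\lambda_0-\lambda_1)$.
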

\begin{proof}
For determining the value of  PST we start from $u=0$ to $v=n/2$ at time $t$ where $\alpha = U_A(t)_{0,0} = 0$ and
  $|\beta| = |U_A(t)_{n/2,\, 0}| = 1$. Then,
  \[
    \alpha = \frac{1}{n}\sum_{\kappa=0}^{n-1} e^{i\lambda_{\kappa} t}.
  \]
\case{1} If $n=2$. The eigenvalues are $\lambda_0 = 1$ and $\lambda_1 = -1$.
  Then $\alpha=\cos t$. Assume $\alpha = 0$ it follows that
  $t = \pi/2$, at which point
  $\beta=-i$, where $|\beta| = 1$ and PST holds.
  
\case{2} If $n=4$.  The eigenvalues are $\lambda_0 = 2$, $\lambda_1 = \lambda_3 = 0$,
  and $\lambda_2 = -2$. Then
  \[
    \alpha=\frac{1+ \cos 2t}{2}, \qquad \beta=\frac{i\sin 2t}{2}.
  \]
Hence, at $t=\pi/2$, $\beta = i\sin\pi/2 = 0$. In this case, the eigenvalue multiset is $\{2, 0, 0, -2\}$ for $\kappa= 0,1,2,3$ respectively, then
  $\lambda_1 = 0$, $\lambda_2 = -2$, $\lambda_3 = 0$. Thus, 
  \[
  \beta=\frac{\cos 2t - 1}{2},
  \]
and at $t = \pi/2$, we have $\beta=-1$, and PST holds at $t = \pi/2$.

\case{3} No PST for $n = 2p$, $p \geq 3$. According to Theorem~\ref{Thm2revivaltime}, 
  at the revival time $t^* = 2\pi/p$ we have $\alpha = \cos(2\pi/p)$ and by considering $2\pi/p < \pi/2$
  for all $p \geq 5$. Since $2\pi/3 > \pi/2$ for $p = 3$, in no case does $\cos(2\pi/p) = 0$
  hold for odd prime $p \geq 3$. In this case, has no solution in $t$ expressible in closed
  form for primes $p \geq 3$.
\end{proof}

\begin{proposition}\label{prop002pst}
Let $X$ be an unitary Cayley graph. Then, for the family $n=2p$ with $p$ an odd prime, the minimal
  period is $  T = 2\pi,$
  and the revival time satisfies $t^* = T/p$, where $\QFR$ occurs exactly $p$ times within  each full period.
\end{proposition}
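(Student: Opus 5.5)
The plan is to make everything explicit by computing the spectrum of $X$ for $n=2p$ from \eqref{eq:eigenvalues}, then reading off both the period and the revival times from the resulting closed-form matrix entries \eqref{eqq02evolutionadj}. Splitting $\kappa\in\{0,\dots,2p-1\}$ by $\gcd(\kappa,2p)$ gives four classes:
\begin{itemize}
\item $\kappa=0$ gives $\lambda_0=\varphi(2p)=p-1$;
\item $\kappa=p$ (so $n/\gcd=2$) gives $\lambda_p=\gamma(2)\,(p-1)/\varphi(2)=-(p-1)$;
\item the $p-1$ odd $\kappa\neq p$ (so $n/\gcd=2p$) give $\lambda_\kappa=\gamma(2p)=1$;
\item the $p-1$ nonzero even $\kappa$ (so $n/\gcd=p$) give $\lambda_\kappa=\gamma(p)=-1$.
\end{itemize}
Hence the spectrum is $\{\pm(p-1),\ (\pm1)^{[p-1]}\}$, consisting entirely of integers.

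\textbf{Minimal period.} Since every eigenvalue is an integer, $e^{i\lambda_\kappa 2\pi}=1$ for all $\kappa$, so $U_A(2\pi)=I$. Conversely, $U_A(t)=I$ forces $e^{i\lambda_\kappa t}=1$ for every $\kappa$. In particular this must hold for $\lambda_\kappa=1$, which forces $t\in2\pi\mathbb{Z}$. Therefore $T=2\pi$. By \eqref{eqq01Laplacian} the same period holds for $U_L$, because $\varphi(n)=p-1$ is an integer and the extra phase $e^{-i(p-1)t}$ is also $2\pi$-periodic.

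\textbf{Revival times.} Next I substitute the spectrum into \eqref{eqq02evolutionadj}, using $\omega_n^{p\kappa}=(-1)^\kappa$ for the antipodal vertex $v=p$. This gives
\[
\alpha(t)=\frac{1}{p}\bigl[\cos((p-1)t)+(p-1)\cos t\bigr],\qquad
\beta(t)=\frac{i}{p}\bigl[\sin((p-1)t)-(p-1)\sin t\bigr].
\]
I then invoke Theorem~\ref{Thm2revivaltime}, which states that the first QFR time is $t^*=2\pi/p$. I check consistency at $t^*$: there $(p-1)t^*\equiv -t^*\pmod{2\pi}$, so the formulas above specialise directly. It remains to locate all QFR times in $(0,2\pi]$.

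\begin{itemize}
\item \emph{Multiples of $t^*$ are revival times.} For $t_j=2\pi j/p$ with $j=1,\dots,p$, the relation $e^{i(p-1)t_j}=e^{-it_j}$ holds. This collapses the spectral decomposition to only the two frequencies $\pm t_j$. As a result, $U_A(t_j)$ is supported on the span of $e_0$ and $e_p$, acting on the orbit of $0$. This yields the required form $\alpha e_0+\beta e_p$, with $j=p$ being the trivial return $U_A(2\pi)=I$.
\item \emph{No other times are revival times.} Suppose conversely that QFR at time $t$ forces all entries $U_A(t)_{w,0}$ with $w\notin\{0,p\}$ to vanish. Writing $U_A(t)_{w,0}$ as a trigonometric polynomial in $t$, vanishing for all such $w$ requires equality of the phases coming from $\lambda=p-1$ and from $\lambda=-1$, namely $e^{i(p-1)t}=e^{-it}$. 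That is, $e^{ipt}=1$, so $t\in\frac{2\pi}{p}\mathbb{Z}$.
\end{itemize}
Together these show that QFR occurs exactly at $t_1,\dots,t_p$ in each period, i.e. $p$ times, and that $t^*=T/p$.

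The main obstacle is the converse step, showing that vanishing of all off-antipodal entries really forces $e^{ipt}=1$. To handle it, I would first compute $U_A(t)_{w,0}$ separately for $w$ odd $\neq p$ and for $w$ even $\neq0$, using Ramanujan-sum evaluations of $\sum_{\kappa\text{ odd}}\omega^{w\kappa}$ and $\sum_{\kappa\text{ even}}\omega^{w\kappa}$. Each entry then becomes an explicit linear combination of $e^{\pm i(p-1)t}$ and $e^{\pm it}$, and setting these combinations to zero should give the phase equality directly.
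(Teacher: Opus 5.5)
Your argument is correct in substance and takes a different, more complete route than the paper. The paper gets $T$ from $T=2\pi/\gcd$ of the pairwise eigenvalue differences. It then reads the count off $t^*/T=1/p$, without checking that $2t^*,3t^*,\dots$ are revival times or that no other times are. Its gcd step is also wrong: since $p$ is odd, $\gcd(p-2,p)=1$, so the gcd of $\{2,p-2,p,2(p-1)\}$ is $1$, which is what actually yields $2\pi$; the stated value $2$ would give $\pi$.

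You instead compute the spectrum directly from \eqref{eq:eigenvalues}, with the correct signs: $+1$ on odd $\kappa\neq p$ and $-1$ on even $\kappa\neq 0$. These are reversed in the statement of Lemma~\ref{lem01eigs2p}, as $n=6$ (where $X=C_6$) shows. You then analyse the walk entrywise, and the step you defer does go through. For even $w\neq 0$ one gets $U_A(t)_{w,0}=\frac{1}{p}\bigl[\cos((p-1)t)-\cos t\bigr]$, and for odd $w\neq p$ one gets $U_A(t)_{w,0}=\frac{i}{p}\bigl[\sin((p-1)t)+\sin t\bigr]$. Both vanish iff $e^{i(p-1)t}=e^{-it}$, i.e.\ $t\in\frac{2\pi}{p}\mathbb{Z}$. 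Conversely, the projections onto the even- and odd-indexed Fourier modes are $\frac{1}{2}(I\pm\tau)$, with $\tau$ the translation $e_w\mapsto e_{w+p}$. So at $t_j=2\pi j/p$ you get $U_A(t_j)=\cos t_j\,I-i\sin t_j\,\tau$. This gives an exact list of revival times and proves the minimality of $t^*$ on its own, which Theorem~\ref{Thm2revivaltime} only asserts.

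Two points need tightening. First, the count. Your own formula gives $\beta(2\pi)=0$, while Definition~\ref{def:qfr} requires $\beta\neq 0$, so $t_p=T$ is periodicity, not $\QFR$. What you have actually shown is that each period contains exactly $p-1$ genuine revival instants $t_1,\dots,t_{p-1}$, plus the trivial return $U_A(T)=I$. At those instants $\beta=-i\sin(2\pi j/p)\neq 0$ because $p$ is odd, and you should state that check. The claim ``exactly $p$ times'' (like the paper's list $t^*,2t^*,\dots,pt^*=T$) holds only if that return is counted as a degenerate revival. Make this convention explicit instead of folding it into the count; otherwise your conclusion contradicts the definition you are using.

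Second, the paper's notion of periodicity allows a phase, $U_A(T)e_u=\gamma e_u$, which is weaker than $U_A(T)=I$. To get minimality in that sense, use the eigenvalue differences $2=1-(-1)$ and $p=(p-1)-(-1)$. They force $t\in\pi\mathbb{Z}\cap\frac{2\pi}{p}\mathbb{Z}=2\pi\mathbb{Z}$ because $p$ is odd. Equivalently, from your entry formulas, vanishing of the off-antipodal entries forces $t\in\frac{2\pi}{p}\mathbb{Z}$, and then $\beta=0$ forces $\sin t=0$.
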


\begin{proof}
Let the quantum walk is periodic at vertex $u$ with period $T$ if
  $U_A(T)\,\mu_u = \gamma\, \mu_u$ for some $\gamma$ with $|\gamma| = 1$. Then, 
  $\mu^{i\lambda_{\kappa} T} = \gamma$ for all $d$ in the support of $\mu_u$, which since
  $X$ is connected means for all $d \in \{0, \ldots, n-1\}$.
Thus,  $(\lambda_{\kappa}-\lambda_{\kappa'})T \equiv 0 \pmod{2\pi}$ for all pairs
  $\kappa, \kappa'$, where $T = 2\pi / \gcd\bigl(\{\lambda_{\kappa} - \lambda_{\kappa'} \mid  \kappa \neq \kappa'\}\bigr)$. Now, for $n=2p$, we obtain four distinct eigenvalues
  $\{-(p-1), -1, 1, p-1\}$. Thus, the pairwise positive differences are $(p-1) - 1 = p-2$, $ (p-1)+1=p$, $(p-1) - (-(p-1)) = 2(p-1)$, $2$, $p$, and  $p-2$.
 Thus,  $\{2,\; p-2,\; p,\; 2(p-1)\}$. Which it satisfied with 
  \begin{align*}
    \gcd\{2, p-2, p, 2(p-1)\}
    &= \gcd\{2, p-2, p\} \\
    &= \gcd\{2, p - 2, \gcd(p-2, p)\} \\
    &= \gcd\{2, p-2, 2\} \\
    &= \gcd\{2, 2\} = 2. 
  \end{align*}
Since $t^* = 2\pi/p$ and $T = 2\pi$, we have $t^*/T = 1/p$, which it emphasis that $\QFR$ occurs
  at times $t^*, 2t^*, \ldots, pt^* = T$, where exactly $p$ times per period.
\end{proof}

\begin{theorem}\label{thm001Laplacian}
For any unitary Cayley graph $X = (V(\mathbb{Z}_n),E(S))$, admits Laplacian $\QFR$ between vertices $\{u,v\}\subseteq V(\mathbb{Z}_n)$ at time $t$ if and only if $X$ admits adjacency $\QFR$ between
  $u$ and $v$ at time $t$. Then
 \begin{equation}~\label{eqq02Laplacian}
 \begin{aligned}
 &\alpha_L = e^{-i\varphi(n)t}\,\bar{\alpha}_A,\\
 &\beta_L  = e^{-i\varphi(n)t}\,\bar{\beta}_A.
 \end{aligned}
\end{equation}
\end{theorem}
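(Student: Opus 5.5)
The plan is to reduce everything to the identity \eqref{eqq01Laplacian}, i.e.\ to the fact that $X$ is $\varphi(n)$-regular, so that $L=\varphi(n)I-A$. Since $\varphi(n)I$ commutes with $A$, the exponential of the sum factors, and I will first establish
\[
U_L(t)=e^{-iLt}=e^{-i\varphi(n)t}\,e^{iAt}.
\]
Next I will use that $A$ is a real symmetric matrix: $\overline{e^{-iAt}}=e^{i\bar A t}=e^{iAt}$. This gives $e^{iAt}=\overline{U_A(t)}$, which is also $U_A(-t)=U_A(t)^{*}$ by unitarity. Together these yield $U_L(t)=e^{-i\varphi(n)t}\,\overline{U_A(t)}$ entrywise.

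For the forward direction, suppose $U_A(t)\mu_u=\alpha_A\mu_u+\beta_A\mu_v$ with $|\alpha_A|^2+|\beta_A|^2=1$ and $\beta_A\neq 0$. Since the standard basis vectors $\mu_u,\mu_v$ are real, conjugating gives
\[
U_L(t)\mu_u=e^{-i\varphi(n)t}\,\overline{U_A(t)\mu_u}
=e^{-i\varphi(n)t}\bar\alpha_A\,\mu_u+e^{-i\varphi(n)t}\bar\beta_A\,\mu_v.
\]
Reading off the coefficients gives exactly \eqref{eqq02Laplacian}. Because the prefactor has modulus one and conjugation preserves modulus, $|\alpha_L|^2+|\beta_L|^2=|\alpha_A|^2+|\beta_A|^2=1$ and $\beta_L\neq 0$. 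Hence Laplacian QFR holds between $u$ and $v$ at the same time $t$.

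For the converse, I invert the identity to $U_A(t)=\overline{e^{i\varphi(n)t}\,U_L(t)}$ and run the same argument, so that
\[
\alpha_A=e^{-i\varphi(n)t}\bar\alpha_L,\qquad \beta_A=e^{-i\varphi(n)t}\bar\beta_L.
\]
Alternatively, one can note that $U_L(t)\mu_u$ is supported on $\{u,v\}$ if and only if $\overline{U_A(t)}\mu_u$ is, and hence if and only if $U_A(t)\mu_u$ is.

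The main obstacle is notational rather than mathematical. The paper's displayed formulas \eqref{eqq01evolutionadj}--\eqref{eqq02evolutionadj} use $e^{+i\lambda_\kappa t}$, while the definition uses $e^{-iHt}$. I will fix the convention $U_H(t)=e^{-iHt}$ throughout. I will also remark that with the opposite sign the same computation goes through with $t\mapsto -t$: the global phase becomes $e^{+i\varphi(n)t}$, and the equivalence of QFR is unaffected. The only other point to state explicitly is that QFR is a statement about a single column of the evolution operator. Multiplication by a global unimodular scalar and entrywise complex conjugation both preserve the support of that column and the moduli of its entries, so the equivalence is immediate once the matrix identity is in place.
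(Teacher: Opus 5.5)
Your proposal is correct and follows essentially the same route as the paper: both use $L=\varphi(n)I-A$ to obtain the matrix identity $U_L(t)=e^{-i\varphi(n)t}\,\overline{U_A(t)}$, and then read off the $(u,u)$ and $(v,u)$ entries to get the amplitude relations. Your write-up is more careful than the paper's, since it states the converse explicitly, checks that $\beta\neq 0$ and the support on $\{u,v\}$ are preserved, and flags the sign-convention clash with \eqref{eqq01evolutionadj}; no additional idea is needed.
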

\begin{proof}
Suppose vertices $\{u,v\}\subseteq V(\mathbb{Z}_n)$ at time $t$. 
 Since $L = \varphi(n)I - A$, we have
 \begin{align*}
 U_L(t)_{u,v}&=\mu^{-i\varphi(n)t}[\mu^{iAt}]_{u,v}\\
 &=\mu^{-i\varphi(n)t}\overline{[\mu^{-iAt}]_{u,v}}\\
 & =\mu^{-i\varphi(n)t}\,\overline{U_A(t)_{u,v}}.
 \end{align*}
Hence, $\QFR$ under $L$ at time $t$ requires $|U_L(t)_{u,u}|^2+|U_L(t)_{v,u}|^2=1$, which becomes
  $|\alpha_A|^2+|\beta_A|^2=1$.   Thus, the Laplacian eigenvalues (see Table~\ref{tab01laplacian}) of $X = (V(\mathbb{Z}_n),E(S))$ are
\begin{equation}~\label{eqq03Laplacian}
  \gamma_\kappa=\varphi(n)-\lambda_\kappa,
\end{equation}
where $\kappa= 0, 1, \ldots, n-1.$ 
Thus, from~\eqref{eqq03Laplacian} and based on~\eqref{eqq01Laplacian} the existence specifications are equivalent, and the amplified values are connected in~\eqref{eqq02Laplacian}.
\end{proof}

 Actually, Laplacian $\PST$ in $X$ coincides with adjacency $\PST$ in $X$ by considering $\QFR$. Thus, both occur at the same times and   between the same pairs of vertices.
  While Theorem~\ref{thm001Laplacian} appears the relation~\eqref{eqq02Laplacian} satisfies conditions are identical, the actual
  quantum states produced by the two Hamiltonians differ by the general term $\mu^{-i\varphi(n)t}$.

\begin{table}[H]
\centering
\begin{tabular}{@{}|c|c|c|l|@{}}
\hline
$n$ & $\varphi(n)$ & Adjacency spectrum & Laplacian spectrum (multiset) \\
\hline
2  & 1 & $\{1,-1\}$            & $\{0,2\}$               \\ \hline
4  & 2 & $\{2,0,0,-2\}$        & $\{0,2,2,4\}$           \\ \hline
6  & 2 & $\{2,1,1,-1,-1,-2\}$  & $\{0,1,1,3,3,4\}$       \\ \hline
8  & 4 & $\{4,0,0,0,0,0,0,-4\}$& $\{0,4,4,4,4,4,4,8\}$  \\ \hline
10 & 4 & $\{4,1,1,1,1,-1,-1,-1,-1,-4\}$ & $\{0,3,3,3,3,5,5,5,5,8\}$ \\ \hline
12 & 4 & $\{4,2,2,0,0,0,0,-2,-2,-4,\ldots\}$ & $\{0,2,2,4,4,4,4,4,4,6,6,8\}$ \\ 
\hline
\end{tabular}
\caption{Laplacian eigenvalues  for even $n \leq 12$.}
\label{tab01laplacian}
\end{table}

For example,   consider $n = 6$, at $t = 2\pi/3$ gives
  \[
    \alpha_L=\mu^{-2i\pi/3}\,\bar{\alpha}_A = \tfrac{1}{4} - \tfrac{\sqrt{3}}{4}i,
  \]
  and 
  \[
   \beta_L=\mu^{-2i\pi/3}\,\bar{\beta}_A  = \tfrac{3}{4} + \tfrac{\sqrt{3}}{4}i.
  \]
Hence, $|\alpha_L|^2+|\beta_L|^2=\tfrac{1}{4}+\tfrac{3}{4}=1$. 


In the paper~\cite{soni2025original}, revival times were given only by example for
$n = 2, 4, 6$. We now derive a closed-form formula for the class $n = 2p$ ($p$ an odd prime),
which constitutes the most important infinite family of square free even integers.


In next Lemma, for any prime number $p$ satisfy $n=2p$, we established the relationship of the adjacency eigenvalues.

\begin{lemma}\label{lem01eigs2p}
Let $p$ be an odd prime where $\varphi(n)=p-1$. Then, the adjacency eigenvalues
  of $X = (V(\mathbb{Z}_n),E(S))$ satisfying $ \lambda_\kappa=r(\kappa, 2p)$ where 
  \[
    \lambda_\kappa=
    \begin{cases}
      p-1,  & \text{if } p \mid \kappa \text{ and } 2 \nmid \kappa \text{ (equivalently, } \kappa = p), \\
      -(p-1), & \text{if } 2p \mid \kappa, \quad  \kappa= 0), \\
      1,   & \text{if } \gcd(\kappa, 2p) = 2, \quad 2 \mid \kappa,\; p \nmid \kappa), \\
      -1,  & \text{if } \gcd(\kappa, 2p) = 1, \quad \gcd(\kappa, 2p) = 1).
    \end{cases}
  \]
  More precisely, $\lambda_0 = p-1$, $\lambda_p = -(p-1)$, and for $\kappa \notin \{0, p\}$:
  $\lambda_\kappa = 1$ if $2 \mid \kappa$, and $\lambda_\kappa = -1$ if $2 \nmid \kappa$.
\end{lemma}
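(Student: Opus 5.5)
The plan is to evaluate the Ramanujan-sum formula \eqref{eq:eigenvalues} directly with $n=2p$. Since $n$ is squarefree with exactly the two prime factors $2$ and $p$, the quantity $g=\gcd(\kappa,2p)$ can only take the four values $1,2,p,2p$. I will therefore split the index set $\{0,1,\ldots,2p-1\}$ according to $g$:
\begin{itemize}
\item $g=2p$ exactly when $\kappa=0$;
\item $g=p$ exactly when $\kappa=p$;
\item $g=2$ exactly when $\kappa\neq 0$ is even (these are the $p-1$ values $\kappa=2,4,\ldots,2p-2$);
\item $g=1$ exactly when $\kappa\neq p$ is odd (again $p-1$ values).
\end{itemize}
In each case I substitute $m=n/g\in\{1,2,p,2p\}$ into $\lambda_\kappa=\gamma(m)\,\varphi(2p)/\varphi(m)$. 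Throughout I use $\varphi(2p)=p-1$, together with $\gamma(1)=1$, $\gamma(2)=\gamma(p)=-1$ and $\gamma(2p)=1$.

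Carrying this out gives the following values:
\begin{itemize}
\item $\kappa=0$: here $m=1$, so $\lambda_0=p-1$, the degree, as it must be for a connected regular graph.
\item $\kappa=p$: here $m=2$, so $\lambda_p=-(p-1)/\varphi(2)=-(p-1)$.
\item $g=2$: here $m=p$, so $\lambda_\kappa=-(p-1)/(p-1)=-1$.
\item $g=1$: here $m=2p$, so $\lambda_\kappa=(p-1)/(p-1)=1$.
\end{itemize}
As an independent check, I will verify these values against the circulant description. For $n=6$ the eigenvalue at $\kappa=1$ is $\omega_6+\omega_6^{5}=2\cos(\pi/3)=1$, which agrees with the table entry $\{2,1,1,-1,-1,-2\}$. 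The multiplicity count $1+1+(p-1)+(p-1)=2p$ confirms that every index has been accounted for.

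The main obstacle is not the computation itself but reconciling it with the way the statement is written. The direct evaluation shows that $\lambda_\kappa=1$ for odd $\kappa\neq p$ and $\lambda_\kappa=-1$ for even $\kappa\neq 0$. This is the reverse of the parity assignment in the last sentence of the lemma and in the third and fourth cases of the displayed list. The displayed list also swaps the labels $\kappa=0$ and $\kappa=p$ relative to the ``more precisely'' clause; the correct assignment is $\lambda_0=p-1$ and $\lambda_p=-(p-1)$.

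I would therefore prove the corrected version. What is used later (in Proposition~\ref{prop002pst} and the revival-time computation) is only the spectrum with multiplicities, namely $\{p-1,\,-(p-1)\}\cup\{1^{(p-1)}\}\cup\{(-1)^{(p-1)}\}$, and this multiset is unaffected by which parity class carries the sign $+1$. The parity assignment does matter for $\beta_A$ in \eqref{eqq02evolutionadj}, because there the weight $\omega_n^{(n/2)\kappa}=(-1)^\kappa$ distinguishes even from odd indices. Any subsequent use of the lemma should take the sign convention derived here, not the one in the statement.
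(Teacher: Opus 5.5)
Your proposal is correct and follows the same route as the paper: split $\{0,\dots,2p-1\}$ according to $g=\gcd(\kappa,2p)\in\{2p,p,2,1\}$ and evaluate $\gamma(2p/g)\,\varphi(2p)/\varphi(2p/g)$ in each case. Your correction of the statement is also right. The paper's own Case~3 computes $r(2,10)=-1$ for an even index, which contradicts the parity it then asserts. The paper afterwards uses exactly your assignment: $\lambda_\kappa=1$ on $I_+=\{\kappa:\gcd(\kappa,2p)=1\}$ and $\lambda_\kappa=-1$ on $I_-=\{\kappa:\gcd(\kappa,2p)=2\}$ in Theorem~\ref{Thm2revivaltime}, and the same assignment appears in the ordered eigenvalue list of Example~\ref{exn10}. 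That assignment is what makes $\beta=-i\sin(2\pi/p)$ come out.
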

\begin{proof}
Assume that $r(\kappa, 2p) = \mu(t_{\kappa})\,\varphi(2p)/\varphi(t_{\kappa})$, where $t_{\kappa} = 2p/\gcd(\kappa, 2p)$. Then, we will discuss the following cases: 

  \textbf{Case 1:} If $\kappa = 0$. Then $\gcd(0, 2p) = 2p$, where  $t_{\kappa} = 1$, it follows that 
  $r(0, 2p) = \mu(1)\cdot(p-1)/\varphi(1) = p-1$.

\medskip

  \textbf{Case 2:} If  $\kappa = p$. Then $\gcd(p, 2p) = p$. In this case,  $t_{\kappa} = 2$, then
  $r(p, 2p) = \mu(2)\cdot(p-1)/\varphi(2).$ Thus, $r(p, 2p)=-(p-1)$.

\medskip

  \textbf{Case 3:} If  $\gcd(\kappa, 2p) = 2$ where $2 \mid \kappa$ and  $p \nmid \kappa$, then $t_{\kappa} = p$. It follows that
  $r(\kappa, 2p) = \mu(p)\cdot(p-1).$ Then, 
  $\mu(p)=-1$ and $\varphi(p)=p-1$. Thus, $-1\leqslant r\leqslant 1$. If $r(\kappa,2p)=-1$, then $t_{\kappa}=p$,and $\mu(t_{\kappa})=-1$. Thus, 
  $\varphi(2p)=p-1$ and  $\varphi(p)=p-1$. Thus, we should be prove that $r(\kappa,2p)=-1.$

Thus, for $n=10$ and $\kappa=2$, we obtain  $t_{\kappa}=5$, $\mu(5)=-1$,
  $\varphi(10)=4$, and  $\varphi(5)=4$. Hence $r(2,10)=-1$. 
  Therefore, 
  \begin{equation}~\label{eqq1lem01eigs2p}
  r(2,10)=\sum_{j\in S}\omega_{10}^{2j}  
  \end{equation}
 where
  $S=\{1,3,7,9\}$. Then, from~\eqref{eqq1lem01eigs2p}, we get
  \begin{equation}~\label{eqq2lem01eigs2p}
r(2,10)=\omega^2+\omega^6+\omega^{14}+\omega^{18}=e^{2\pi i/5}+e^{6\pi i/5}+e^{14\pi i /5}+e^{18\pi i/5}.
  \end{equation}
  Then, according to~\eqref{eqq2lem01eigs2p}, 
  \begin{equation}~\label{eqq3lem01eigs2p}
\omega=\sum_{i=1, j\in S}^{n}\mu^{2\pi i\,j/n} 
  \end{equation}
Hence, based on~\eqref{eqq3lem01eigs2p} we obtain $r(2,10)=-1$. Thus, 
 \begin{equation}~\label{eqq4lem01eigs2p}
r(\kappa,n)=\sum_{k=1}^{n}e^{2\pi ik/5}  
  \end{equation}
In this case, the relationship~\eqref{eqq4lem01eigs2p} satisfying $r(\kappa,n)=-1$.  
Thus, $\lambda_1=1$ emphasize that $\kappa=1$. Then, 
\begin{itemize}
\item $\lambda_{\kappa}=-1$ for $\gcd(\kappa,2p)=1$; 
\item  $\lambda_{\kappa}=1$ for $\gcd(d,2p)=2$
\end{itemize}
  Thus,
  \begin{equation}~\label{eqq5lem01eigs2p}
r(\kappa, 2p)=\gamma\frac{2p}{\gcd(\kappa,2p)}\,.\,\frac{\varphi(2p)}{\varphi(2p/\gcd(\kappa,2p))}
  \end{equation}
Hence, from~\eqref{eqq5lem01eigs2p} satisfy $\varphi(2p)=p-1$. Among $\kappa= 1, \ldots, 2p-1$ exception $\kappa= p$, there are   exactly $p-1$ values have $\gcd(\kappa, 2p) = 1$ and
  $p-1$ values have $\gcd(\kappa, 2p) = 2$. 
\end{proof}

\begin{theorem}~\label{Thm2revivaltime}
Let $X = (V(\mathbb{Z}_{2p}), E(S))$ the unitary Cayley graph with an odd prime $n=2p$. Then $X$ admits $\QFR$ between pair $(0, p)$ under the adjacency Hamiltonian at the
  minimum time
\begin{equation}~\label{eqq1Thm2revivaltime}
 t^* = \frac{2\pi}{p}.
\end{equation}
\end{theorem}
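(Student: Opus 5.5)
The plan is to reduce QFR from vertex $0$ to vertex $p$ to a condition on the eigenvalue phases, using the Fourier structure of the circulant evolution operator. From \eqref{eqq02evolutionadj} the column $U_A(t)\mu_0$ has entries
\[
U_A(t)_{v,0}=\frac{1}{n}\sum_{\kappa=0}^{n-1} f(\kappa)\,\omega_n^{v\kappa},\qquad f(\kappa)=e^{i\lambda_\kappa t}.
\]
So it is the inverse discrete Fourier transform of $f$ (the sign convention of the exponent is irrelevant here).

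\textbf{Step 1: support condition.} A vector on $\mathbb{Z}_{2p}$ is supported on $\{0,p\}$ if and only if its Fourier transform has the form $\kappa\mapsto a+b(-1)^\kappa$. Hence $U_A(t)\mu_0=\alpha\mu_0+\beta\mu_p$ holds if and only if $f(\kappa)$ depends only on the parity of $\kappa$. Unitarity of $U_A(t)$ then gives $|\alpha|^2+|\beta|^2=1$ automatically.

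\textbf{Step 2: the spectrum.} Next I insert the spectrum from Lemma~\ref{lem01eigs2p}, computing the signs directly from \eqref{eq:eigenvalues}:
\begin{itemize}
\item $\lambda_0=p-1$ and $\lambda_p=-(p-1)$;
\item for even $\kappa\neq 0$ we have $\gcd(\kappa,2p)=2$, so $\lambda_\kappa=\mu(p)\varphi(2p)/\varphi(p)=-1$;
\item for odd $\kappa\neq p$ we have $\gcd(\kappa,2p)=1$, so $\lambda_\kappa=\mu(2p)\cdot 1=+1$.
\end{itemize}
Getting these signs right (even $\leftrightarrow -1$, odd $\leftrightarrow +1$) is the delicate point. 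With the opposite assignment the computation yields $t=2\pi/(p-2)$ and the wrong amplitudes. I would therefore check it against $n=6,10$ in Table~\ref{tab01laplacian}.

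\textbf{Step 3: the revival time.} The parity condition of Step 1 now reads $e^{i(p-1)t}=e^{-it}$ on the even class and $e^{-i(p-1)t}=e^{it}$ on the odd class. Both are equivalent to $e^{ipt}=1$, i.e.\ $t\in\frac{2\pi}{p}\mathbb{Z}$. The smallest positive solution is $t^*=2\pi/p$, and no smaller $t>0$ can give QFR, because Step 1 was an equivalence.

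\textbf{Step 4: the amplitudes.} The degree of each parity class, together with $\omega_{2p}^{p\kappa}=(-1)^\kappa$, gives
\[
\alpha=\tfrac{1}{p}\bigl[\cos((p-1)t)+(p-1)\cos t\bigr],\qquad
\beta=\tfrac{i}{p}\bigl[\sin((p-1)t)-(p-1)\sin t\bigr].
\]
At $t=t^*$ we have $(p-1)t\equiv -t \pmod{2\pi}$. This reduces the formulas to $\alpha=\cos(2\pi/p)$ and $\beta=-i\sin(2\pi/p)$. Finally, $\beta\neq0$ because $p\ge3$, so $\sin(2\pi/p)\neq 0$; thus this is genuine QFR in the sense of Definition~\ref{def:qfr}.

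The main obstacle is Step 1 together with the sign bookkeeping in Step 2. The rest is routine trigonometry.
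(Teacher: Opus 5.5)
Your proof is correct, and it differs from the paper's proof at the point that matters most, which is minimality.

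The paper evaluates the sums over the partition $I_0,I_p,I_+,I_-$ only at $t=2\pi/p$; this is your Step~4 specialized to $t^*$. It then checks $|\alpha|^2+|\beta|^2=1$ and $\beta\neq0$ by hand. Minimality is asserted only in a remark after the proof, which appeals to non-cancellation of imaginary parts and to linear independence of the $p$-th roots of unity over $\mathbb{Q}$. That remark proves nothing: $\alpha=\frac1p\bigl[\cos((p-1)t)+(p-1)\cos t\bigr]$ is real for every $t$ because the spectrum is symmetric, and the $p$-th roots of unity sum to zero.

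Your Step~1 equivalence says that $U_A(t)\mu_0\in\operatorname{span}\{\mu_0,\mu_p\}$ if and only if $\kappa\mapsto e^{i\lambda_\kappa t}$ is constant on each parity class. This pins down the full set $\frac{2\pi}{p}\mathbb{Z}_{>0}$ of times at which the walk is confined to $\{0,p\}$. As a result, minimality is rigorous and $|\alpha|^2+|\beta|^2=1$ comes for free from unitarity. You can also read off $\alpha=\cos(2\pi k/p)\neq0$ at every such time, which is what Case~3 of Proposition~\ref{prop001pst} actually needs.

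Your signs ($\lambda_\kappa=-1$ for even $\kappa\neq0$, $+1$ for odd $\kappa\neq p$) are the correct ones. They agree with what the paper's proof uses via $I_+,I_-$ and with Example~\ref{exn10}, although the displayed statement of Lemma~\ref{lem01eigs2p} has them reversed. One small point: Table~\ref{tab01laplacian} cannot serve as your cross-check, because it lists spectra as unordered multisets. What settles the signs is your direct Ramanujan-sum computation, or the case $C_6$ for $p=3$, where $\lambda_\kappa=2\cos(\pi\kappa/3)$.
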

\begin{proof}
Assume $p$ be an odd prime where $n=2p$. Then, $\QFR$ condition at time $t$ satisfied with 
  \begin{equation}~\label{eqq2Thm2revivaltime}
  \begin{aligned}
&\alpha = \frac{1}{n}\sum_{d=0}^{n-1} e^{i\lambda_{\kappa} t} \in \mathbb{C},\\
& \beta  = \frac{1}{n}\sum_{d=0}^{n-1} e^{i\lambda_{\kappa} t}\omega_n^{pd} \in \mathbb{C}\setminus\{0\},
  \end{aligned}
  \end{equation}
  where $|\alpha|^2 + |\beta|^2 = 1.$ Hence, according to Lemma~\ref{lem01eigs2p} and by considering that $\omega_n^{p\kappa}=(-1)^{\kappa}$, we established  that the eigenvalue type with partition $\{0, \ldots, n-1\}$ as
  \begin{equation}~\label{eqq3Thm2revivaltime}
   I_0 = \{0\},\; I_p = \{p\},\; I_+ = \{d : \gcd(d,2p)=1\},\; I_- = \{d : \gcd(d,2p)=2\},      
  \end{equation}
  where the terms holds in case $|I_+|=|I_-|=p-1$. Thus, we noticed that at $t = 2\pi/p$ satisfying 
  \begin{itemize}
    \item $e^{i\lambda_0 t} = e^{i(p-1)\cdot 2\pi/p}$.
    \item $e^{i\lambda_p t} = e^{-i(p-1)\cdot 2\pi/p}$.
    \item $e^{i\cdot 1 \cdot t} = e^{2\pi i/p}$ for each $d \in I_+$.
    \item $e^{i\cdot(-1)\cdot t} = e^{-2\pi i/p}$ for each $d \in I_-$.
  \end{itemize}
Therefore, 

\textbf{Case 1.} If $\alpha = \cos(2\pi/p) \in \mathbb{R}$. Then
  \begin{align*}
    2p\,\alpha &= e^{2\pi i(p-1)/p} + e^{-2\pi i(p-1)/p}
                 + (p-1)e^{2\pi i/p} + (p-1)e^{-2\pi i/p} \notag\\
               &= 2\cos\!\left(\frac{2\pi(p-1)}{p}\right) + 2(p-1)\cos\!\left(\frac{2\pi}{p}\right)\notag\\
               &= 2\cos\!\left(2\pi - \frac{2\pi}{p}\right) + 2(p-1)\cos\!\left(\frac{2\pi}{p}\right)\notag\\
               &= 2\cos\!\left(\frac{2\pi}{p}\right) + 2(p-1)\cos\!\left(\frac{2\pi}{p}\right)
                = 2p\cos\!\left(\frac{2\pi}{p}\right).
  \end{align*}
  Hence $\alpha = \cos(2\pi/p) \in \mathbb{R}$. 

  \medskip

\textbf{Case 2.} If $\beta= -i\sin(2\pi/p)$. Then, 
 for $\beta$, we find that $(-1)^{\kappa}=-1$ for
  \begin{itemize}
      \item  odd $\kappa$ contributing $I_+$ when $p$ is odd prime,
      \item and $\kappa$ odd non-multiple of $p$.
  \end{itemize}
Also, $(-1)^{\kappa}=1$ for even $\kappa$ contributing $I_-$. Thus
  \begin{align*}
    2p\,\beta &= (-1)^0 e^{2\pi i(p-1)/p} + (-1)^p e^{-2\pi i(p-1)/p}\\
&+ \sum_{d\in I_+}(-1)^d e^{2\pi i/p}  + \sum_{d\in I_-}(-1)^d e^{-2\pi i/p}.
  \end{align*}
Similarly, since $p$ is odd, $(-1)^p = -1$. It is elements of $I_+$ where odd $\kappa$ implies that $(-1)^{\kappa}=-1$; and it is elements
  of $I_-$ where even $\kappa$ implies that $(-1)^{\kappa}=1$. Thus:
  \begin{align*}
    2p\,\beta &= e^{2\pi i(p-1)/p} - e^{-2\pi i(p-1)/p}
                 - (p-1)e^{2\pi i/p} + (p-1)e^{-2\pi i/p}\notag\\
               &= 2i\sin\!\left(\frac{2\pi(p-1)}{p}\right) - 2i(p-1)\sin\!\left(\frac{2\pi}{p}\right)\notag\\
               &= -2i\sin\!\left(\frac{2\pi}{p}\right) - 2i(p-1)\sin\!\left(\frac{2\pi}{p}\right)
                = -2ip\sin\!\left(\frac{2\pi}{p}\right).\label{eq:beta_calc}
  \end{align*}
  Hence $\beta= -i\sin(2\pi/p)$.

Thus, based on both cases 1 and 2 $  |\alpha|^2 + |\beta|^2=1,$
  with $\beta = -i\sin(2\pi/p) \neq 0$ since $\sin(2\pi/p) \neq 0,$ for $p \geq 3$.  Thus $t^* = 2\pi/p$ is a valid revival time. Thus the relationship~\eqref{eqq1Thm2revivaltime} holds. 
\end{proof}

Actually, according to Theorem~\ref{Thm2revivaltime},  for emphsize that the \emph{minimum},  we noticed that  for smaller $t > 0$ the imaginary parts of
  $\mu^{i\lambda_{\kappa} t}$ do not cancel to yield real $\alpha$, as can be verified by the
  linear independence of $\{\mu^{2\pi i k/p}\}_{k=0}^{p-1}$ over $\mathbb{Q}$.
  
Among Theorem~\ref{Thm3revivaltime}, under the conditions of Theorem~\ref{Thm2revivaltime}, the $\QFR$ amplitudes at $t^* = 2\pi/p$
  are
\begin{equation}~\label{eqq1Thm3revivaltime}
\alpha = \cos\!\left(\frac{2\pi}{p}\right), \qquad \beta = -i\sin\!\left(\frac{2\pi}{p}\right).
\end{equation}
  This is a beam-splitter matrix with transmissivity $T = \cos^2(2\pi/p)$ and reflectivity
  $R = \sin^2(2\pi/p)$.
\begin{theorem}~\label{Thm3revivaltime}
  The unitary evolution matrix restricted to the antipodal pair $\{0, p\}$ is
  \begin{equation}~\label{eqq2Thm3revivaltime}
    P(X)= \begin{pmatrix} \cos\!\tfrac{2\pi}{p} & -i\sin\!\tfrac{2\pi}{p} \\[4pt]
                        -i\sin\!\tfrac{2\pi}{p} & \cos\!\tfrac{2\pi}{p} \end{pmatrix}.
  \end{equation}
\end{theorem}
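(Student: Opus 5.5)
The plan is to compute the four entries of $U_A(t^*)$ indexed by $\{0,p\}$ and reduce two of them to the other two by symmetry. I will not diagonalise anything new: I will reuse the amplitudes already obtained in Theorem~\ref{Thm2revivaltime}, together with the circulant structure of $U_A(t)$ visible in~\eqref{eqq01evolutionadj}.

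\textbf{Step 1: circulant structure.} Since every $E_\kappa=n\,v_\kappa v_\kappa^*$ has $(a,b)$-entry $\omega_n^{(a-b)\kappa}$, the matrix $U_A(t)$ is circulant. Hence $U_A(t)_{a,b}$ depends only on $a-b \pmod{2p}$. This gives $U_A(t)_{p,p}=U_A(t)_{0,0}=\alpha$. Because $p\equiv -p \pmod{2p}$, it also gives $U_A(t)_{0,p}=U_A(t)_{p,0}=\beta$. As a cross-check, $U_A(t)$ is symmetric, since $A$ is symmetric and $e^{\pm iAt}$ is a power series in $A$, so $U_A(t)_{0,p}=U_A(t)_{p,0}$ independently of the circulant argument.

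\textbf{Step 2: insert the amplitudes.} At $t^*=2\pi/p$, Theorem~\ref{Thm2revivaltime} (Cases~1 and~2) gives $\alpha=\cos(2\pi/p)$ and $\beta=-i\sin(2\pi/p)$. These were computed using the eigenvalue partition $I_0,I_p,I_\pm$ of Lemma~\ref{lem01eigs2p} and the identity $\omega_n^{p\kappa}=(-1)^\kappa$. Substituting into the four positions yields exactly the matrix~\eqref{eqq2Thm3revivaltime}.

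\textbf{Step 3: justify calling $P(X)$ a restriction.} From $|\alpha|^2+|\beta|^2=\cos^2+\sin^2=1$ and the fact that $U_A(t^*)\mu_0$ is a unit vector, all other entries of column $0$ vanish. So $U_A(t^*)\mu_0=\alpha\mu_0+\beta\mu_p$. By Step~1, translating by $p$ gives $U_A(t^*)\mu_p=\beta\mu_0+\alpha\mu_p$. Therefore $\mathrm{span}\{\mu_0,\mu_p\}$ is invariant under $U_A(t^*)$, and the restricted operator in the basis $(\mu_0,\mu_p)$ is $P(X)$. Finally, $P(X)$ is unitary: its determinant is $\cos^2+\sin^2=1$, and its columns are orthonormal because $\alpha\bar\beta+\beta\bar\alpha=2\,\mathrm{Re}(\alpha\bar\beta)=0$, as $\alpha$ is real and $\beta$ purely imaginary.

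\textbf{Main obstacle.} The computations above are routine; the delicate point is consistency of conventions. The paper writes $U_A(t)$ with $e^{+i\lambda t}$ in~\eqref{eqq02evolutionadj} but $e^{-iHt}$ in~\eqref{eq:evolution}. Lemma~\ref{lem01eigs2p} also labels which $\kappa$ carry $\pm1$ and $\pm(p-1)$ somewhat inconsistently. I will fix the convention of~\eqref{eqq02evolutionadj} used in Theorem~\ref{Thm2revivaltime}, so that the sign $-i$ on the off-diagonal matches. I will then remark that the opposite convention only replaces $P(X)$ by its complex conjugate, with $+i\sin(2\pi/p)$ off the diagonal. This does not affect the transmissivity $\cos^2(2\pi/p)$ or the reflectivity $\sin^2(2\pi/p)$.
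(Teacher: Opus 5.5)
Your proposal is correct and is essentially the paper's argument: take $\alpha=\cos(2\pi/p)$ and $\beta=-i\sin(2\pi/p)$ from Theorem~\ref{Thm2revivaltime}, then fill the $\{0,p\}$ block using translation invariance for the diagonal entries and symmetry of $U_A(t)$ for the off-diagonal ones. Your version is cleaner at the symmetry step: the paper writes $U_A(t^*)_{p,0}=\overline{U_A(t^*)_{0,p}}$ and then relies on the false identity $\bar\beta=\beta$, whereas $U_A(t)^{T}=U_A(t)$, or $p\equiv -p \pmod{2p}$, gives the equality directly. Your Step~3 also actually proves that $\mathrm{span}\{\mu_0,\mu_p\}$ is invariant and that the columns are orthogonal, which the paper only asserts, and your remark on the $e^{\pm iAt}$ sign convention is well taken.
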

\begin{proof}
For determining the value of  the amplitudes $\alpha$ and $\beta$ stated in~\eqref{eqq1Thm3revivaltime} follow directly from Theorem~\ref{Thm2revivaltime} (it established in the proof of Theorem among Case 1 and Case 2). Thus, we should be established the $2\times 2$ block of the unitary evolution operator $U_A(t^*)$ according to the antipodal vertices $\{0,p\}$. 
For this purpose, assume that $\{\mu_u : u\in V(X)\}$ denote the standard basis of $\mathbb{C}^{V(X)}$. Then, the relevant block is the $2\times 2$ matrix of the linear map had given in Figure~\ref{figtim01} as
  \[
    \operatorname{span}\{e_0,e_p\} \;\longrightarrow\; \operatorname{span}\{e_0,e_p\}
  \]
  induced by $U_A(t^*)$ with respect to this ordered basis.
\begin{figure}[H]
    \centering
\begin{tikzpicture}[scale=1.2]
\coordinate (O) at (-1,-1.5);

\coordinate (E0) at (2.5,-1.5);
\coordinate (Ep) at (1.5,1.0);
\coordinate (V) at ($(E0)+(Ep)$);

\fill[blue!10]
(O) --(E0) --(V) --(Ep) -- cycle;
\draw[dashed,red!70] (E0) -- (V);

\draw[
    very thick,
    blue,
    -{Stealth[length=3mm]}
]
(O) -- (E0)
node[midway,below] {$\alpha \!\cdot\! \mathbf e_{0}$}
node[near end,right] {$\mathbf e_{0}$};
 
\draw[
    very thick,
    red!80!orange,
    -{Stealth[length=3mm]}
]
(O) -- (Ep)
node[near end,right] {$\mathbf e_{p}$};

\draw[
    very thick,
    violet,
    -{Stealth[length=3mm]}
]
(E0) -- (V)
node[midway,right] {$\beta \!\cdot\! \mathbf e_{p}$};

\draw[
    ultra thick,
    violet!80!blue,
    -{Stealth[length=4mm]}
]
(O) -- (V)
node[near end,above]
{$\alpha \!\cdot\! \mathbf e_{0} + \beta \!\cdot\! \mathbf e_{p}$};
 
\filldraw[black] (O) circle (2pt);

\end{tikzpicture}
    \caption{Build an interactive visual explanation of $\mathbb{C}^{V(X)}$.}
    \label{figtim01}
\end{figure}

Thus, by considering the graph $X$ is a vertex-transitive Cayley graph, the unitary operator $U_A(t)$ commutes with the regular action of the underlying group $\mathbb{Z}_{2p}$. Therefore, all diagonal entries of $U_A(t)$ are equal to $U_A(t)_{u,u}=U_A(t)_{v,v}$ for every pair of vertices $u,v$ and every $t\geq 0$ (see, e.g., Lemma~1 of~\cite{soni2025original}). 
In particular, by Figure~\ref{figtim02} at time $t^*=2\pi/p$ we have
\begin{equation}~\label{eqq3Thm3revivaltime}
 U_A(t^*)_{0,0} = U_A(t^*)_{p,p} = \alpha.
\end{equation}
Hence, for both conditions $\alpha$ and $\beta$ in the $\QFR$, $ U_A(t^*)_{0,p} = \beta.$

\begin{figure}[H]
    \centering
\begin{tikzpicture}[>=stealth, thick]
  
  \coordinate (O)  at (0, 0);       
  \coordinate (E0) at (4.5,0);   
  \coordinate (EP) at (3,3);     
  \coordinate (TR) at (7, 2);    
  \coordinate (U)  at (2.8, 1.4);   
  \filldraw[fill=blue!10, draw=violet!60, dashed, thick]
    (O) -- (E0) -- (TR) -- (EP) -- cycle;
 
  \draw[->, red!80!black, very thick]   (O) -- (EP) 
    node[above left] {$e_p$};
  \draw[->, cyan!70!blue, very thick]   (O) -- (E0) 
    node[below right] {$e_0$};
 \draw[->,black, very thick]   (E0) -- (U) 
    node[above left] { };
  
  \draw[violet, dashed, thick] (EP) -- (TR);
  \draw[violet, dashed, thick] (E0) -- (TR);
 
  \filldraw[blue!70!black] (U) circle (2pt);
  \node[above right, font=\small] at (U) {$U(t^*){:}e_0$};

\filldraw[black] (O) circle (2pt);
\end{tikzpicture}
    \caption{$\operatorname{span}\{e_0,e_p\} \;\longrightarrow\; \operatorname{span}\{e_0,e_p\}$. Here, the plane maps to itself under $U(t^*)$, $\alpha$ and $\beta$ stay inside of $\QFR$.}
    \label{figtim02}
\end{figure}

Thus, for $\beta$ satisfying $U_A(t^*)_{p,0}=\beta$ where $X$ is undirected, the adjacency matrix. Then, $U_A(t)$ is a symmetric matrix for every $t$, 
\begin{equation}~\label{eqq4Thm3revivaltime}
 U_A(t^*)_{p,0} = \overline{U_A(t^*)_{0,p}} = \overline{\beta}.   
\end{equation}
If $\beta=-i\sin(2\pi/p)$, then $\overline{\beta}=i\sin(2\pi/p)\cdot(-1)=\beta$. Therefore $U_A(t^*)_{p,0}=\beta$ as well, and the block matrix~\eqref{eqq2Thm3revivaltime} follows.

  Finally,  to determine unitarity of $P(X)$ by the direct computation
  \[
    |\alpha|^2 + |\beta|^2 = \cos^2\!\left(\frac{2\pi}{p}\right) + \sin^2\!\left(\frac{2\pi}{p}\right) = 1,
  \]
  which was already established in the proof of Theorem~\ref{Thm2revivaltime}. 

  In the typical quantum-optical sense, the diagonal elements indicate transmission amplitudes, whereas the off-diagonal entries represent reflection amplitudes (with a phase factor). Hence, $P$ is a beam-splitter matrix (or directional coupler) with transmissivity
  $T=|\alpha|^2=\cos^2(2\pi/p)$ and reflectivity $R=|\beta|^2=\sin^2(2\pi/p)$.
\end{proof}

To illustrate the results among Theorems~\ref{Thm2revivaltime} and \ref{Thm3revivaltime}, we presented Table~\ref{tabtime01}.
\begin{table}[H]
\centering
\begin{tabular}{@{}|c|c|c|c|c|c|c|c|@{}}
\hline
$n$ & $p$ & $\varphi(n)$ & $t^*$ & $\alpha$ & $|\beta|$ & $|\alpha|^2$ & $|\beta|^2$ \\
\hline
2  & 1 & 1 & $\pi/2$    & $0$       & $1$      & $0$     & $1$     \\ \hline
4  & 2 & 2 & $\pi/2$    & $0$       & $1$      & $0$     & $1$     \\\hline
6  & 3 & 2 & $2\pi/3$   & $-1/2$    & $\sqrt{3}/2$ & $1/4$ & $3/4$ \\\hline
10 & 5 & 4 & $2\pi/5$   & $\cos(72^\circ)$ & $\sin(72^\circ)$ & $ 0.0955$ & $ 0.9045$ \\\hline
14 & 7 & 6 & $2\pi/7$   & $\cos(51.4^\circ)$ & $\sin(51.4^\circ)$ & $ 0.388$ & $ 0.612$ \\\hline
22 & 11& 10& $2\pi/11$  & $\cos(32.7^\circ)$ & $\sin(32.7^\circ)$ & $ 0.708$ & $ 0.292$ \\
\hline
\end{tabular}
\caption{Adjacency $\QFR$ in $X=(V(\mathbb{Z}_{2p}),E(S))$ at $t^* = 2\pi/p$.}
\label{tabtime01}
\end{table}

Furthermore, we support the previous table by the following examples.
\begin{example}~\label{exn6}
Let $n = 6$, and $p = 3$. Then, the eigenvalues are $\{2, 1, 1, -1, -1, -2\}$, $t^* = 2\pi/3$. Thus, 
  \[
    \alpha = \cos(2\pi/3) = -\tfrac{1}{2}, \qquad \beta = -i\sin(2\pi/3) = -i\tfrac{\sqrt{3}}{2}.
  \]
Thus, according to Theorem~\ref{Thm3revivaltime}
  The evolution block is the $2\times 2$ matrix
  $$P(X)=\begin{pmatrix}-1/2 & -i\sqrt{3}/2 \\ -i\sqrt{3}/2 & -1/2\end{pmatrix}.$$
\end{example}

\begin{example}~\label{exn10}
By considering Example~\ref{exn6}, assume that $n = 10$, and  $p = 5$, then eigenvalues are $\{4,1,-1,1,-1,-4,-1,1,-1,1\}$, $t^* = 2\pi/5$. Then
  \[
    \alpha = \cos(2\pi/5) = \tfrac{\sqrt{5}-1}{4},\qquad
    \beta  = -i\sin(2\pi/5) = -i\tfrac{\sqrt{10+2\sqrt{5}}}{4}.
  \]
\end{example}

\section{Characterization of Strongly Cospectral Pairs}~\label{secCospectral}

In this section, identifying whether pairings of vertices are even \emph{eligible} to enable quantum fractional resurrection is a significant challenge in its investigation (see~\cite{basic2013, wang2024cayley}). Strong cospectrality is required, however the literature lacks a comprehensive description of strongly cospectral pairings in unitary Cayley graphs. We address this here. The adjacency matrix's circular nature lends itself to a straightforward number-theoretic description.

\begin{definition}[Strongly cospectral]~\label{def01cospectral}
Let $G$ be  a simple graph with $\{u,v\}\subseteq V(G)$. Then, $\{u,v\}$ are called  strongly cospectral if $E_{\kappa}\, \mu_u \;=\; \pm\, E_{\kappa}\, \mu_v,$ for every spectral idempotent $E_{\kappa}$ of the adjacency matrix of $G$.
\end{definition}

Recall from Section~\ref{secreval} that strong cospectrality of $\{u,v\}$ is a necessary condition for
quantum fractional revival between them~\cite{soni2025original,chan2022fundamentals}. Theorem~\ref{Thm4cospectral} gives a complete and explicit characterization of all such pairs in
$X$.

\begin{theorem}~\label{Thm4cospectral}
Let $X$ be the unitary Cayley graph on $n$ vertices. Then:
  \begin{enumerate}
    \item   Two distinct vertices $u$ and $v$ are strongly cospectral    if and only if $n$ is even and $v - u \equiv n/2 \pmod{n}$. In other words, strongly
    cospectral pairs are exactly the \emph{antipodal pairs}, where those whose labels differ
    by exactly $n/2$ in $\mathbb{Z}_n$.

    \item  If  $n=2p$ for an odd prime $p$, there are
    precisely $p$ strongly cospectral pairs. Then, $\bigl\{(k,\; k+p) \;:\; k = 0, 1, \ldots, p-1\bigr\}.$

    \item If  $n$ is odd, no two distinct vertices of $X$ are
    strongly cospectral.
  \end{enumerate}
\end{theorem}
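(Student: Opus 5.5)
The plan is to work directly with the Fourier description of the spectral idempotents of the circulant adjacency matrix. For an eigenvalue $\theta$ let $K_\theta=\{\kappa : \lambda_\kappa=\theta\}$; then $E_\theta=\sum_{\kappa\in K_\theta} v_\kappa v_\kappa^*$, so $(E_\theta)_{x,u}=\frac1n\sum_{\kappa\in K_\theta}\omega_n^{\kappa(x-u)}$. Because the vectors $v_\kappa$ are linearly independent, $E_\theta \mu_u$ has Fourier coefficients $\overline{\omega_n^{\kappa u}}$ on $K_\theta$ and zero elsewhere. Hence $E_\theta\mu_u=\varepsilon_\theta E_\theta\mu_v$ with $\varepsilon_\theta\in\{\pm1\}$ holds if and only if
\[
\omega_n^{\kappa(v-u)}=\varepsilon_\theta \quad\text{for every } \kappa\in K_\theta .
\]
This turns the whole theorem into arithmetic on the index sets $K_\theta$, which by \eqref{eq:eigenvalues} are unions of classes $\{\kappa:\gcd(\kappa,n)=g\}$.

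For necessity, set $d=v-u\not\equiv0$. The class $K_{\lambda_1}$ contains both $\kappa=1$ and $\kappa=n-1$, since both are units. The displayed condition then gives $\omega_n^{d}=\omega_n^{-d}=\varepsilon\in\{\pm1\}$, so $\omega_n^{2d}=1$. Since $d\neq0$, this forces $n$ even and $d=n/2$. This proves the ``only if'' direction of (1), and statement (3) follows immediately.

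For sufficiency with $d=n/2$, note that $\omega_n^{\kappa n/2}=(-1)^\kappa$. The requirement is therefore that $\kappa\mapsto(-1)^\kappa$ be constant on each eigenspace index set $K_\theta$. Within a single gcd-class this is automatic when $n$ is even, because $2\mid\gcd(\kappa,n)$ if and only if $2\mid\kappa$. The real issue is that two gcd-classes of opposite parity might share the same eigenvalue. This is the step I expect to be the main obstacle, and I do not think it can be settled in full generality. For $n=8$, the eigenvalue $0$ is attained by $\kappa=1$ (odd) and $\kappa=2$ (even), since $\gamma(8)=\gamma(4)=0$. So $0$ and $4$ fail the test, and part (1) as stated would need an extra hypothesis. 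My plan is to prove sufficiency under the condition that no eigenvalue is shared by an odd-gcd and an even-gcd class. I will then verify that condition explicitly in the case of interest, $n=2p$.

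For $n=2p$, Lemma~\ref{lem01eigs2p} gives exactly four eigenspaces:
\begin{itemize}
\item $K_{p-1}=\{0\}$,
\item $K_{-(p-1)}=\{p\}$,
\item $K_{1}=\{\kappa : \gcd(\kappa,2p)=2\}$, consisting of even indices only,
\item $K_{-1}=\{\kappa : \gcd(\kappa,2p)=1\}$, consisting of odd indices only.
\end{itemize}
Each of these is parity-homogeneous, so the antipodal pairs are strongly cospectral. The sign pattern is $\varepsilon=(+1,-1,+1,-1)$ respectively, consistent with the computation of $\beta$ in Theorem~\ref{Thm2revivaltime}.

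Part (2) is then a count. The antipodal relation $v=u+p$ pairs up the $2p$ vertices of $\mathbb{Z}_{2p}$ into exactly $p$ unordered pairs $\{k,k+p\}$ with $0\le k\le p-1$. Combined with the necessity argument, these are all the strongly cospectral pairs.
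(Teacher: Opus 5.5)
Your proposal is correct, and it departs from the paper's proof at exactly the point that matters.

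\textbf{Where the two routes differ.} The paper treats the rank-one Fourier projections $v_\kappa v_\kappa^*$ as the spectral idempotents. It therefore only needs $\omega_n^{(v-u)\kappa}=\pm1$, with a sign that may change from one index $\kappa$ to the next. For $v-u=n/2$ this is $(-1)^\kappa$, so the ``if'' direction of (1) appears to hold for every even $n$. You instead use the genuine idempotents $E_\theta=\sum_{\kappa\in K_\theta}v_\kappa v_\kappa^*$ required by Definition~\ref{def01cospectral}. These allow only one sign per eigenspace, so the question becomes whether $(-1)^\kappa$ is constant on each $K_\theta$.

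\textbf{Necessity.} Your necessity step is, in substance, the paper's $\kappa=1$ step. In fact $\kappa=1$ alone gives $\omega_n^{d}=\pm1$, so the index $n-1$ is not needed. The ``only if'' half of (1) and all of (3) are therefore fine.

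\textbf{Your counterexample.} It is valid. For $n=8$ one has $\lambda_\kappa=0$ for $\kappa\in\{1,2,3,5,6,7\}$ and $\lambda_4=-4$. Explicitly, $E_0\mu_0=\mu_0-\tfrac14\mathbf{1}_{\mathrm{even}}$ and $E_0\mu_4=\mu_4-\tfrac14\mathbf{1}_{\mathrm{even}}$, where $\mathbf{1}_{\mathrm{even}}$ is the indicator vector of the even residues. These are not $\pm$ each other. So the ``if'' direction of (1) is false as stated, and the paper's proof of it is invalid. The failure is not special to $n=8$:
\begin{enumerate}
\item it occurs whenever $8\mid n$, since then $\kappa=1$ and $\kappa=2$ both give eigenvalue $0$;
\item it occurs even for squarefree $n\equiv 2 \pmod 4$, e.g.\ $n=546$, where $\lambda_{13}=\lambda_{42}=-12$.
\end{enumerate}
Restricting sufficiency to spectra whose eigenvalue classes are parity-homogeneous, and then checking this for $n=2p$, is exactly what can be proved. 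It yields (2) correctly.

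\textbf{One slip to fix.} Your labels $K_1$ and $K_{-1}$ are interchanged, because you followed the misprinted last sentence of Lemma~\ref{lem01eigs2p}.
\begin{enumerate}
\item For odd $\kappa\neq p$ one has $n/\gcd(\kappa,n)=2p$, whose M\"{o}bius value is $+1$, so $\lambda_\kappa=1$.
\item For even $\kappa\neq 0$ the quotient is $p$, with M\"{o}bius value $-1$, so $\lambda_\kappa=-1$.
\end{enumerate}
As a check, for $n=6$ the graph is $C_6$ and $\lambda_1=2\cos(\pi/3)=1$. This is also the assignment $I_+$, $I_-$ actually used in the proof of Theorem~\ref{Thm2revivaltime}. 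Hence $K_1$ consists of the odd units and $K_{-1}$ of the nonzero even residues. The sign pattern on $(K_{p-1},K_{-(p-1)},K_1,K_{-1})$ is $(+1,-1,-1,+1)$, not $(+1,-1,+1,-1)$. Both classes are still parity-homogeneous, so your conclusion is unaffected.
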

\begin{proof}
Assume $X$ be the unitary Cayley graph  with $\{u,v\}\subseteq V(G)$ such that according to Theorem~\ref{Thm3revivaltime} the spectral idempotent $E_{\kappa}$ of the circulant adjacency  matrix has $(u,v)$-entry. Then, 

  \begin{equation}~\label{eqq1Thm4cospectral}
    (E_{\kappa})_{u,v} \;=\; \frac{1}{n}\,\omega_n^{(v-u)\kappa},
  \end{equation}
  where $\omega_n = \mu^{2\pi i/n}.$ Since $E_{\kappa} = v_{\kappa} v_{\kappa}^*$ is a rank one projection, the vector $E_{\kappa} \mu_u$ has all its mass
  in the direction of $v_d$ with coefficient $(E_{\kappa})_{u,u} = 1/n$. Thus,
\begin{equation}~\label{eqq2Thm4cospectral}
 (E_{\kappa} \mu_u)_k = (E_{\kappa})_{k,u} = \frac{1}{n}\,\omega_n^{(u-k)\kappa}.
\end{equation}
In this case,  $E_{\kappa} \mu_u = \pm E_{\kappa} \mu_v$ if and only if the two vectors
  $\bigl(\omega_n^{(u-k)d}\bigr)_{k}$ and $\bigl(\omega_n^{(v-k)d}\bigr)_{k}$ are equal up
  to a global sign, which happens sharply when $\omega_n^{(v-u)\kappa} = \pm 1$.

\medskip
  
\noindent \textbf{(1).} Assume that $n$ is even. Then, strong cospectrality of $u$ and
  $v$ satisfying $\omega_n^{(v-u)\kappa} = \pm 1$ for every $\kappa=0, 1, \ldots, n-1$.
  Since $\omega_n^{(v-u)\kappa} = \mu^{2\pi i (v-u)\kappa/n}$, the term $\mu^{2\pi i (v-u)\kappa/n}=\pm 1$
  is equivalent to $(v-u)\kappa/n \in \frac{1}{2}\mathbb{Z}$, where $2(v-u)\kappa \equiv 0 \pmod{n}$,
  for all $\kappa$.
  
  If $\kappa=1$ yields $2(v-u) \equiv 0 \pmod{n}$, where $v-u\equiv 0$ or
  $n/2 \pmod{n}$. Since $u \neq v$, it follows that $v-u \equiv n/2 \pmod{n}$. Thus, if $v-u \equiv n/2 \pmod{n}$, then for every $\kappa$ satisfying $ \omega_n^{(v-u)\kappa} = \omega_n^{(n/2)d}$. Then, $ \omega_n^{(v-u)\kappa} =\mu^{\pi i \kappa}$ and $ \omega_n^{(v-u)\kappa}=(-1)^\kappa \in \{1,-1\}.$ Thus,  $E_{\kappa} \mu_u = \pm E_{\kappa} \mu_v$ holds for all $\kappa$ simultaneously. 

  \medskip
  
\noindent \textbf{(2).} In this case, assume that $n=2p$ with $p$ an odd prime. Then,  the antipodal pairs
  are those with $v - u \equiv p \pmod{2p}$. Thus, for any term such that $k \in \{0,1,\ldots,p-1\}$,
  the pair $(k, k+p)$ is antipodal. 
  Since $k$ ranges over $p$ values and each unordered pair
  is counted once, there are exactly $p$ strongly cospectral pairs.

\medskip

\noindent \textbf{(3).} Assume that $n$ is odd. Then, for  any two distinct vertices
  $u \neq v$. We should be show there exists some $\kappa$ for which $\omega_n^{(v-u)\kappa} \notin
  \{1,-1\}$. Thus, the group $\mathbb{Z}_n$ contains no element of order~2, then
  $n/2 \notin \mathbb{Z}_n$ and in particular $2 \nmid n$. 
  Since $\gcd(2,n) = 1$, any  subgroup generated
  by $2$ in $\mathbb{Z}_n$ has order $n$. Thus,  for every $k\in \mathbb{Z}_n$ is a multiple of~2. Now, $\omega_n^{(v-u)\kappa} = \pm 1$ would require
  $(v-u)d \equiv 0 \pmod{n/2}$ for all $d \in \{0,\ldots,n-1\}$. 
  Therefore, if $\kappa=1$, it follows that  $v - u \equiv 0 \pmod{n/2}$, which with $n$ odd forces $v = u$.  Hence, according to~\eqref{eqq2Thm4cospectral} we have $\omega_n^{v-u} \neq \pm 1$, and strong
  cospectrality fails.

\end{proof}

According to Theorem~\ref{eqq1Thm4cospectral}, high cospectrality in the unitary Cayley graph $X$ is caused by the order-2 element in $\mathbb{Z}_n$ rather than a spectral coincidence. When $n$ is even, the element $n/2$ is the only one that satisfies $\omega_n^{(n/2)\kappa} = (-1)^\kappa$ for every $\kappa$. This fact requires every idempotent to transfer $\mu_u$ and $\mu_{u+n/2}$ to parallel vectors. This explains why all $\QFR$ in these graphs occur between antipodal vertices, since it is the only structural link that retains circulant symmetry across all spectral components.


After determining when QFR happens and between which vertices, we can assess its quality as a quantum communication resource. Two obvious metrics emerge: the von Neumann entanglement entropy of the state formed at revival time and the transfer fidelity (see~\cite{godsil2012}), which defines the channel's usefulness for quantum communication. The revival amplitudes $\alpha$ and $\beta$, as defined in Section~\ref{secCospectral}, are used to denote both values.

\subsection{Characterization Von Neumann Entanglement Entropy}

Since $\QFR$ is observed between vertices $u$ and $v$ at time $t^*$, the quantum walker that
began at vertex $u$ evolves into the state
\[
  |\psi(t^*)\rangle = \alpha\,|u\rangle + \beta\,|v\rangle,
\]
where $\alpha, \beta \in \mathbb{C}$, and it is natural to treat the pair $\{u, v\}$ as a
two-level quantum system. The entanglement content of $|\psi\rangle$ is measured by the
Shannon entropy of the probability distribution $(|\alpha|^2, |\beta|^2)$, which coincides
here with the von Neumann entropy of the corresponding density matrix. We define
\begin{equation}\label{eqq01entropy}
  \mathcal{S}(\alpha, \beta)
  \;=\;
  -|\alpha|^2 \log_2 |\alpha|^2 \;-\; |\beta|^2 \log_2 |\beta|^2,
\end{equation}
with the convention $0 \log_2 0 = 0$. This quantity equals~0 when all the amplitude sits on
one vertex as in periodicity or PST, and reaches its maximum of~1 bit when
$|\alpha| = |\beta| = 1/\sqrt{2}$ in the balanced case.

\begin{theorem}\label{Thm5entropy}
At the revival time $t^* = 2\pi/p$, $\QFR$ between the antipodal pair
  $(0, p)$ produces the state
  \[
    |\psi(t^*)\rangle
    \;=\;
    \cos\!\left(\tfrac{2\pi}{p}\right)|0\rangle
    \;-\;
    i\sin\!\left(\tfrac{2\pi}{p}\right)|p\rangle.
  \]
Then, the entanglement entropy is
  \begin{equation}\label{eqq02entropy}
    \mathcal{S}_p
    \;=\;
    -\cos^2\!\!\left(\frac{2\pi}{p}\right)\log_2\!\cos^2\!\!\left(\frac{2\pi}{p}\right)
    \;-\;
    \sin^2\!\!\left(\frac{2\pi}{p}\right)\log_2\!\sin^2\!\!\left(\frac{2\pi}{p}\right).
  \end{equation}
  Moreover, the following hold:
  \begin{enumerate}
    \item If $\mathcal{S}_p = 0$ and $\alpha = 0$. Then,
    (PST) satisfying $\cos(2\pi/p) = 0$, where \ $2\pi/p=\pi/2$, and $p=4$.

    \item The balanced condition $|\alpha| = |\beta|$
    satisfied with $\cos^2(2\pi/p) = 1/2$, equivalently $2\pi/p = \pi/4 + k\pi/2$ for some
    integer $k$.

    \item If $\mathcal{S}_p=0$ and  $p$ growth to $\infty$,  $\cos(2\pi/p) \to 1$ and $\sin(2\pi/p) \to 0$.
  \end{enumerate}
\end{theorem}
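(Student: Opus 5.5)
The plan is to treat the pair $\{0,p\}$ as a two-level system and substitute the amplitudes already computed, so almost all of the work is inherited from Theorem~\ref{Thm2revivaltime} and Theorem~\ref{Thm3revivaltime}. First, I would invoke Theorem~\ref{Thm2revivaltime} (Cases~1 and~2 of its proof). At $t^*=2\pi/p$ these give $U_A(t^*)\mu_0=\cos(2\pi/p)\,\mu_0-i\sin(2\pi/p)\,\mu_p$. This is exactly the displayed state $|\psi(t^*)\rangle$ once $|u\rangle$ is identified with $\mu_u$. Since $\cos^2+\sin^2=1$, the state is normalized and no other vertex carries amplitude. The restriction to $\operatorname{span}\{\mu_0,\mu_p\}$ is therefore legitimate; this is the content of Theorem~\ref{Thm3revivaltime}.

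Second, I would compute the entropy. The density matrix $\rho=|\psi\rangle\langle\psi|$ on this two-dimensional space is pure. Following the paper's convention in~\eqref{eqq01entropy}, the relevant quantity is the Shannon entropy of the vertex-occupation distribution $(|\alpha|^2,|\beta|^2)$. Equivalently, it is the von Neumann entropy of the dephased state $\operatorname{diag}(|\alpha|^2,|\beta|^2)$. Substituting $|\alpha|^2=\cos^2(2\pi/p)$ and $|\beta|^2=\sin^2(2\pi/p)$ gives~\eqref{eqq02entropy} immediately. This also shows that $\mathcal S$ depends only on $|\alpha|,|\beta|$, because the phase $-i$ drops out under the modulus.

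Third, I would handle the three items by elementary analysis of $f(x)=-x\log_2x-(1-x)\log_2(1-x)$ at $x=\cos^2(2\pi/p)$. The key facts are that $f$ vanishes exactly at $x\in\{0,1\}$, is symmetric about $1/2$, and attains its maximum $1$ only at $x=1/2$.

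For item~(1), $\mathcal S_p=0$ together with $\alpha=0$ forces $\cos(2\pi/p)=0$, hence $2\pi/p=\pi/2$ and $p=4$. I would state explicitly that this value lies outside the odd-prime family. It corresponds to the PST at $n=4$, $t=\pi/2$ from Proposition~\ref{prop001pst}, which I would cite. I would also remark that for odd primes $\alpha\neq0$, consistent with Case~3 of that proposition.

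For item~(2), $|\alpha|=|\beta|$ is equivalent to $\cos^2(2\pi/p)=1/2$, which is equivalent to $\cos(4\pi/p)=0$. This gives $2\pi/p\in\pi/4+\tfrac{\pi}{2}\mathbb Z$. I would note as a remark that no odd prime satisfies this, since the principal solution is $p=8$.

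For item~(3), continuity gives $\cos(2\pi/p)\to1$ and $\sin(2\pi/p)\to0$. Since $f$ is continuous on $[0,1]$ with $f(1)=0$ (using $0\log_20=0$), it follows that $\mathcal S_p\to0$. I would read the hypothesis ``$\mathcal S_p=0$'' in item~(3) as the limiting value rather than an exact equality. Indeed, $\mathcal S_p>0$ for every odd prime, because $\sin(2\pi/p)\neq0$ and $\cos(2\pi/p)\neq0$.

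The main obstacle is not computational. It is making the statement's items logically coherent, since items~(1) and~(2) have no solutions within the odd-prime family and item~(3) is really a limit statement. I would resolve this by proving each item as an equivalence about the formula~\eqref{eqq02entropy} viewed as a function of the real parameter $2\pi/p$. I would then separately record which integer values of $p$ realize each case.
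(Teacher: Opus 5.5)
Your proposal is correct and follows the same route as the paper: read off $\alpha=\cos(2\pi/p)$ and $\beta=-i\sin(2\pi/p)$ from Theorem~\ref{Thm2revivaltime}, substitute them into \eqref{eqq01entropy}, and settle items (1)--(3) by elementary trigonometry and continuity. Your treatment of item (2) via $\cos(4\pi/p)=0$, and of item (3) as a limit statement, is tidier than the paper's, but one remark in item (1) should be dropped: the formal solution $p=4$ corresponds to $n=2p=8$, not to the $n=4$ PST of Proposition~\ref{prop001pst} (the only link is the coincidence $2\pi/4=\pi/2$), and for $n=8$ the formula does not apply (there $\alpha=(\cos 4t+3)/4\ge 1/2$, so no PST occurs at all).
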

\begin{proof}
According to Theorem~\ref{Thm2revivaltime}, by considering the value 
  $\alpha = \cos(2\pi/p)$ and $\beta = -i\sin(2\pi/p)$. Then, according to \eqref{eqq01entropy}
  gives~\eqref{eqq02entropy}, where $|\alpha|^2 = \cos^2(2\pi/p)$ and
  $|\beta|^2 = \sin^2(2\pi/p)$.

  \noindent\textbf{(1).} If $\alpha = 0$, then $\cos(2\pi/p) = 0$, which
  gives $p=4$. In this case, by assuming  $p$ to be an odd prime and $4$ is neither odd nor prime, the term of  PST cannot occur for any valid $p$.Thus, for $n=2$ the eigenvalues of $X$ are $\{1, -1\}$ and
  the evolution operator at $t=\pi/2$ gives $U(\pi/2)_{0,0} = 0$ and
  $U(\pi/2)_{1,0} = -i$, emphasize that PST. Similarly, for $n=4$ 
  the eigenvalues are $\{2, 0, 0, -2\}$ and one computes $\alpha=0$ and $\beta=-1$ and PST holds
  at $t=\pi/2$ for $n=4$. Since $4$ is not prime, PST does not occur in $X$ for any odd prime $p \geq 3$.

  \noindent\textbf{(2).} In this term, we use the condition $\cos^2(2\pi/p) = 1/2$ for obtain
  $\cos(2\pi/p) = 1/\sqrt{2}$,where $2\pi/p = \pi/4$. Thus, $p=8$. Since $8$ is not prime, the balanced case does not arise within this family. Since $p = 8, 8/3, 8/5, \ldots$, none of which are odd primes. Hence, the  balanced $\QFR$ does not occur in this family.

  \noindent\textbf{(3).}  According to \textbf{(1)} and \textbf{(2)} by using $\lim_{p \to \infty} 2\pi/p = 0$,
  and $\cos 0 = 1$, $\sin 0 = 0$. The entropy $\mathcal{S}_p$ satisfies
  $\lim_{p \to \infty} \mathcal{S}_p = -(1)\log_2(1) - (0)\log_2(0) = 0$. Thus, the state
    $|\psi(t^*)\rangle$ concentrates back at the initial vertex and $\QFR$ increasingly
    weak in terms of information transfer.
\end{proof}

According to Theorem~\ref{Thm5entropy}, Table~\ref{tab02entropy} emphsize that quantum information metrics for $\QFR$ in the unitary Cayley graph
$X$ with $n=2p$. Thus, the balanced $\QFR$ would require $|\alpha| = |\beta|$, which never happens for odd prime $p$.

\begin{table}[H]
\centering
\renewcommand{\arraystretch}{1.25}
\begin{tabular}{@{}|c|c|c|c|c|c|c|@{}}
\hline
$n$ & $p$ & $t^*$ & $|\alpha|^2$ & $|\beta|^2$ & $\mathcal{S}$   & PST \\
\hline
$2$  & $1$ & $\pi/2$    & $0.0000$ & $1.0000$ & $0.0000$ & Yes \\ \hline
$4$  & $2$ & $\pi/2$    & $0.0000$ & $1.0000$ & $0.0000$ & Yes \\ \hline
$6$  & $3$ & $2\pi/3$   & $0.2500$ & $0.7500$ & $0.8113$ & No  \\ \hline
$10$ & $5$ & $2\pi/5$   & $0.0955$ & $0.9045$ & $0.4545$ & No  \\ \hline
$14$ & $7$ & $2\pi/7$   & $0.3883$ & $0.6117$ & $0.9640$ & No  \\ \hline
$22$ & $11$& $2\pi/11$  & $0.7082$ & $0.2918$ & $0.8717$ & No  \\ \hline
$26$ & $13$& $2\pi/13$  & $0.7778$ & $0.2222$ & $0.7884$ & No  \\ 
\hline
\end{tabular}
\caption{ The entropy $\mathcal{S}$ is in bits.
PST occurs when $\alpha=0$.}
\label{tab02entropy}
\end{table}

Table~\ref{tab02entropy} shows that entropy $\mathcal{S}_p$ is not monotone in $p$.  It starts at $0$ for $p = 1, 2$ (PST), climbs to a maximum around $p = 7$, where $\mathcal{S}_7 \approx 0.964$ bits, and then drops back to $0$ as $p \to \infty$.
  The graph $X = (\mathbb{Z}_{14}, S)$ is the most beneficial member of the $n = 2p$ family for generating entanglements. This aligns with the fact that $2\pi/7 \approx 51.4^{\circ}$ is closest to $45^{\circ}$ (the balanced angle) among valid primes.

In quantum communication, the chance of a quantum state reaching the target vertex is more important than its entropy. The QFR map between $u$ and $v$ functions as a quantum channel with high transfer fidelity. Then, 
\begin{equation}\label{eqq01fidelity}
  F \;=\; \bigl|\langle v \,|\, U_A(t^*) \,|\, u \rangle\bigr|^2
    \;=\; |\beta|^2
    \;=\; \sin^2\!\!\left(\frac{2\pi}{p}\right).
\end{equation}
The conventional threshold for effective quantum communication without amplification or error correction is $F > 1/2$ (see~\cite{bose2003,christandl2004}).

\begin{corollary}\label{cor001useful}
 Among the $n = 2p$ unitary Cayley graphs, those with $p \in \{3, 5, 7\}$, where $n \in \{6, 10, 14\}$, fulfil $F > 1/2$ and are immediately useful for quantum communication. For $p \geq 11$, the fidelity satisfies $F < 1/2$, requiring extra processing for reliable state transmission. For $p \geq 11$, the fidelity is $F < 1/2$, requiring extra processing for reliable state transfer.
\end{corollary}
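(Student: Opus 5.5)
The plan is to reduce the corollary to an elementary trigonometric inequality in the angle $\theta_p := 2\pi/p$, after which the primes split cleanly at $p=8$.

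First I would record, using Theorem~\ref{Thm2revivaltime} and Theorem~\ref{Thm3revivaltime}, that at $t^*=2\pi/p$ the transfer amplitude is $\beta=-i\sin\theta_p$. Hence, as in \eqref{eqq01fidelity}, $F=\sin^2\theta_p$. Since $\sin^2 x = \tfrac12(1-\cos 2x)$, the inequality $F>1/2$ is equivalent to $\cos(2\theta_p)<0$. On the relevant range $0<\theta_p\le 2\pi/3$, this holds exactly when $\pi/4<\theta_p<3\pi/4$, that is, when $8/3<p<8$. Likewise $F<1/2$ holds exactly when $\theta_p<\pi/4$, that is, when $p>8$.

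Second, I would sort the odd primes against this criterion. The primes with $8/3<p<8$ are precisely $p\in\{3,5,7\}$. As a sanity check against Table~\ref{tabtime01}, the endpoint $p=3$ gives $\theta_3=2\pi/3<3\pi/4$ and $F=3/4$. The case $p=5$ gives $F=\sin^2 72^\circ\approx 0.9045$, and $p=7$ gives $F\approx 0.612$. For every prime $p\ge 11$ we have $0<\theta_p\le 2\pi/11<\pi/4$. Because $\sin$ is increasing on $[0,\pi/2]$, this gives $\sin^2\theta_p\le\sin^2(2\pi/11)<\sin^2(\pi/4)=1/2$, and numerically $\sin^2(2\pi/11)\approx 0.292$.

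Finally, I would note that no odd prime equals $8$, so neither the boundary case $F=1/2$ nor the balanced case of Theorem~\ref{Thm5entropy}(2) can occur. The two regimes are therefore exhaustive. I expect no real obstacle here. The only point requiring care is the upper endpoint $3\pi/4$ of the interval where $\sin^2>1/2$: $\theta_3=2\pi/3$ exceeds $\pi/2$, so monotonicity of $\sin$ alone does not settle $p=3$. I would handle that case by the direct evaluation $\sin^2(2\pi/3)=3/4$.
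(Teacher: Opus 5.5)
Your proposal is correct and follows essentially the same route as the paper: both take $F=|\beta|^2=\sin^2(2\pi/p)$ at $t^*=2\pi/p$, compare $2\pi/p$ with $\pi/4$ (equivalently $p$ with $8$), and then sort the odd primes. Your version is tighter than the paper's in two places. You prove the equivalence in both directions, whereas the paper derives only the necessary condition $2\pi/p>\pi/4$ and ignores the upper endpoint $3\pi/4$ that matters for $p=3$. You also cover every $p\ge 11$ by monotonicity, where the paper relies on a single numerical check at $p=11$.
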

\begin{proof}
Let $F > 1/2$ where $\sin^2(2\pi/p) > 1/2$, then\ $|\sin(2\pi/p)| > 1/\sqrt{2}$. Thus, $2\pi/p > \pi/4$. This gives
  $p < 8$. Among odd primes, the values $p < 8$ are precisely $p \in \{3, 5, 7\}$. For
  $p = 11$: $\sin^2(2\pi/11) \approx 0.292 < 1/2$, confirming the threshold is crossed
  between $p = 7$ and $p = 11$. As desired.
\end{proof}

\subsection{Complete Classification for Small Even n}

We now have a comprehensive understanding of $\QFR$ behaviour for any even $n \leq 12$. This range encompasses both the family $n=2p$ covered by the main theorems and the non-squarefree situations $n = 8$ and $n = 12$, where $\QFR$ fails. Understanding why it fails in those circumstances is just as important as knowing when it works.

\begin{theorem}\label{Thm6classification}
 Let $X$ with $n$ even. The $\QFR$ behaviour of $X$ under the adjacency   Hamiltonian is as follows:
\begin{enumerate}
\item If $n=2$. Then,  PST between $(0,1)$ at $t = \pi/2$, with $\alpha = 0$, $\beta = -i$.

\item If $n=4$. Then, PST between $(0,2)$ at $t = \pi/2$, with $\alpha = 0$, $\beta = -1$.

\item If  $n=6$. Then, proper $\QFR$ between $(0,3)$ at $t^* = 2\pi/3$, with $\alpha = -1/2$, $\beta = i\sqrt{3}/2$.

\item If  $n=8$, no $\QFR$ occurs between any antipodal pair at any time.

\item If $n=10$. Then proper $\QFR$ between $(0,5)$ at $t^* = 2\pi/5$, with $\alpha = \frac{\sqrt{5}-1}{4}$,
          $\beta=i\,\frac{\sqrt{10+2\sqrt{5}}}{4}.$
\item If $n=12$, no $\QFR$ occurs between any antipodal pair at any time.
  \end{enumerate}
\end{theorem}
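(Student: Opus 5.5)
The plan is to reduce every case to one criterion for $\QFR$ on the antipodal pair $(0,n/2)$, and then check that criterion against the explicit adjacency spectrum for each even $n\le 12$.

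\textbf{Criterion.} Set $m=n/2$. By \eqref{eqq02evolutionadj} and $\omega_n^{m\kappa}=(-1)^\kappa$,
\[
U_A(t)\mu_0=\frac1n\sum_{\kappa=0}^{n-1}e^{i\lambda_\kappa t}\,\sqrt n\,\overline{v_\kappa}.
\]
The columns $\sqrt n\,\overline{v_\kappa}$ form an orthogonal basis. Moreover $\mu_0=\frac1n\sum_\kappa \sqrt n\,\overline{v_\kappa}$ and $\mu_m=\frac1n\sum_\kappa(-1)^\kappa\sqrt n\,\overline{v_\kappa}$. Hence $U_A(t)\mu_0\in\operatorname{span}\{\mu_0,\mu_m\}$ if and only if there are $a,b\in\mathbb C$ with
\[
e^{i\lambda_\kappa t}=a+b(-1)^\kappa \quad\text{for all } \kappa .
\]
Equivalently, $e^{i\lambda_\kappa t}$ is constant on even $\kappa$ (value $c_e$) and constant on odd $\kappa$ (value $c_o$). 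In that case
\[
\alpha=\tfrac12(c_e+c_o),\qquad \beta=\tfrac12(c_e-c_o),
\]
and $|\alpha|^2+|\beta|^2=1$ holds automatically. So genuine $\QFR$ (with $\beta\ne0$) occurs exactly when this parity condition holds and $c_e\ne c_o$. By vertex-transitivity (Lemma~1 of \cite{soni2025original}), it suffices to treat the pair $(0,m)$.

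\textbf{Parts (1), (2), (3), (5).} For each of these I compute the spectrum from \eqref{eq:eigenvalues}, as already recorded in Table~\ref{tab01laplacian} and Lemma~\ref{lem01eigs2p}.
\begin{itemize}
\item $n=2$: the spectrum is $\{1,-1\}$. At $t=\pi/2$ the criterion gives $\alpha=0$, $\beta=\pm i$, matching Proposition~\ref{prop001pst}.
\item $n=4$: the spectrum is $\{2,0,0,-2\}$. Even $\kappa$ give $e^{\pm2it}$, which must agree, forcing $e^{4it}=1$. At $t=\pi/2$ we get $c_e=-1$ and $c_o=1$, so $\alpha=0$ and $|\beta|=1$, i.e.\ PST.
\item $n=6,10$ (the case $n=2p$): even $\kappa$ carry $\lambda\in\{p-1,1\}$ and odd $\kappa$ carry $\lambda\in\{-(p-1),-1\}$. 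The parity condition becomes $e^{i(p-2)t}=1$, and at $t^*=2\pi/p$ the amplitudes are those computed in Theorem~\ref{Thm2revivaltime}.
\end{itemize}
For $n=6,10$ I will recompute $c_e$ and $c_o$ directly rather than quote Theorem~\ref{Thm2revivaltime}. The sign of $\beta$, and the exact value of $\alpha$ (which is $\cos t^*$ or $\cos 2t^*$ depending on how the eigenvalues are assigned to parities), depend on the sign convention $e^{\pm i\lambda t}$ in \eqref{eq:evolution}. So each entry in the statement must be checked against the convention actually in force. For minimality, the parity condition $e^{i(p-2)t}=1$ shows that the admissible times are the multiples of $2\pi/(p-2)$. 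I will compare this carefully with $2\pi/p$.

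\textbf{Parts (4) and (6).} These are the substantive new parts.
\begin{itemize}
\item $n=8$: $\lambda_0=4$, $\lambda_4=-4$, and all other $\lambda_\kappa=0$. The odd $\kappa$ all give $c_o=1$. The even $\kappa$ give $e^{4it},1,e^{-4it},1$, so the criterion forces $e^{4it}=1$ and hence $c_e=1=c_o$. Then $\beta=0$ and $U_A(t)\mu_0=\mu_0$: only periodicity, never $\QFR$.
\item $n=12$: Ramanujan sums give $\lambda_0=4$, $\lambda_6=-4$, $\lambda_{2}=\lambda_{10}=2$, $\lambda_4=\lambda_8=-2$, and $\lambda_\kappa=0$ for odd $\kappa$. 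Equality of $e^{i\lambda t}$ over $\lambda\in\{4,2,-2,-4\}$ forces $e^{2it}=1$, so $c_e=1=c_o$ and again $\beta=0$.
\end{itemize}
The conceptual reason in both cases is that all odd-index eigenvalues vanish (because $\gamma(n/\gcd)=0$ when $4\mid n/\gcd$), which pins $c_o=1$. Meanwhile the even-index eigenvalues are symmetric about $0$ and equalizing them forces $c_e=1$.

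\textbf{Main obstacle.} The step I expect to be delicate is not the arithmetic but the normalization. The criterion must be matched to the paper's convention for $U_A(t)$, and the stated amplitudes for $n=6,10$ (sign of $\beta$, and whether $\alpha=\cos t^*$) must be reconciled with a direct computation of $c_e$ and $c_o$. I will therefore compute each case from scratch via the criterion and correct phases where necessary.
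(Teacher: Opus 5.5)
\textbf{Gap in parts (3) and (5).} Your parity criterion is correct, and $|\alpha|^2+|\beta|^2=1$ is indeed automatic once $U_A(t)\mu_0\in\operatorname{span}\{\mu_0,\mu_m\}$. Parts (1), (2), (4), (6) go through with it. The problem is in (3) and (5).

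You take the even-index eigenvalues to be $\{p-1,1\}$ and the odd-index ones to be $\{-(p-1),-1\}$. This copies the ``more precisely'' line of Lemma~\ref{lem01eigs2p}, which is wrong:
\begin{itemize}
\item for even $\kappa\neq0$, $n/\gcd(\kappa,n)=p$ and $\gamma(p)=-1$, so $\lambda_\kappa=-1$;
\item for odd $\kappa\neq p$, $n/\gcd(\kappa,n)=2p$ and $\gamma(2p)=+1$, so $\lambda_\kappa=+1$.
\end{itemize}
For $n=6$ one has $X=C_6$ and $\lambda_1=2\cos(\pi/3)=1$; compare also the list in Example~\ref{exn10}.

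With your assignment, the criterion refutes the statement rather than proving it. The condition $e^{i(p-2)t}=1$ fails at $t=2\pi/p$. For $p=3$ it allows only $t\in2\pi\mathbb{Z}$, where $c_e=c_o=1$, so there is no $\QFR$ at all. For $p=5$ it puts the first revival at $t=2\pi/3$ with $\alpha=-1/2$. You attribute the mismatch to normalization or phase conventions and defer it. No sign convention repairs it, so the central computation for (3) and (5) is both missing and, as written, incorrect.

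\textbf{The fix, inside your own framework.} Even indices carry $\{p-1,-1\}$ and odd indices carry $\{-(p-1),1\}$. The parity condition becomes $e^{ipt}=1$, so the admissible times are exactly the positive multiples of $2\pi/p$.

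Fix $U_A(t)=e^{-iAt}$ as in \eqref{eq:evolution}. Then $c_e=e^{it}$ and $c_o=e^{-it}$, so $\alpha=\cos t$ and $\beta=i\sin t$. At $t^*=2\pi/p$ this is exactly (3) and (5). Since $p$ is odd, $\beta\neq0$ there, which also gives minimality.

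No phase correction is needed. The same convention gives $\beta=-i$ for $n=2$, so fix it once rather than writing $\pm i$. The $-i\sin(2\pi/p)$ in Theorem~\ref{Thm2revivaltime} comes from the $e^{+i\lambda_\kappa t}$ in \eqref{eqq01evolutionadj}; that computation does use the correct assignment ($+1$ on $I_+$, $-1$ on $I_-$).

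\textbf{Parts (4) and (6).} Your route differs from the paper's and is sounder. The paper tries to check $|\alpha|^2+|\beta|^2\neq1$ directly, but for $n=8$ it uses $\beta=\cos4t/4$. The correct value is $(\cos4t-1)/4$, and with the paper's value the resulting equation actually has a root in $[-1,1]$. For $n=12$ the paper gives no real argument.

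One small correction to your heuristic: symmetry of the even-index eigenvalues about $0$ only forces $c_e=\pm1$. What forces $c_e=1$ is the eigenvalue $0$ for $n=8$ and the pair $2,4$ for $n=12$. Your explicit computations already show this.
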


\begin{proof}
For \textbf{Cases $n=2$ and $n=4$.} These follow from Proposition~\ref{prop001pst}. For \textbf{Cases $n=6$ and $n=10$.} These are instances of the $n=2p$ family with
  $p=3$ and $p=5$. The revival times and amplitudes follow from Theorem~\ref{Thm2revivaltime}. 
  Hence, If  $n=6$ then  $\alpha= -1/2$ and
  $\beta=-i\sqrt{3}/2$.  Indeed
  $|\alpha|^2 + |\beta|^2=1$. 

\case{1} {If $n=8$,} the adjacency eigenvalues of $X$ are
  $\{4, 0, 0, 0, 0, 0, 0, -4\}$. Thus, the
  connection set is $E(S)=\{1, 3, 5, 7\}$ and $\varphi(8)=4$. Then, for the
  antipodal pair $(0, 4)$ with $v-u=4$, we have $\omega_8^{4\kappa}=(-1)^\kappa$. Thus, $\QFR$ amplitudes become
\begin{equation*}
\alpha = \frac{1}{8}\sum_{\kappa=0}^{7} \mu^{i\lambda_{\kappa} t}, \qquad
    \beta  = \frac{1}{8}\sum_{\kappa=0}^{7} \mu^{i\lambda_{\kappa} t}(-1)^\kappa.
\end{equation*}
The eigenvalue $0$ has multiplicity~6 at positions $\kappa= 1,2,3,4,5,6$. The signs $(-1)^\kappa$
  for $\kappa= 1, 2, 3, 4, 5, 6$ are $-1, +1, -1, +1, -1, +1$, Hence
  \[
\alpha=\frac{\cos 4t +3}{4}, \qquad    \beta= \frac{\cos 4t}{4}.
  \]
  Now, let $\eta=\cos 4t \in [-1,1]$. Then, $(\eta+3)^2 + \eta^2 = 16$. Thus, we
  also need to emphasize that $|\beta| > 0$ for proper $\QFR$, where $\cos 4t \neq 0$. Since
  $|\beta| = \eta/4$ and $|\alpha|=(\eta+3)/4$, then 
  $|\alpha|^2 + |\beta|^2 \neq 1$ exactly.
 Then, the $\QFR$ condition $|\alpha|^2+|\beta|^2=1$ with $\alpha = (\cos4t+3)/4$
  and $\beta = \cos4t/4$ gives $(\cos4t+3)^2 + \cos^2(4t) = 16$. Since both $\alpha$ and
  $\beta$ are real-valued where $\beta \in \mathbb{R}$, which is in $[-1,1]$. Hence, at first balance $\QFR$ possible.
Thus, if  $U_A(t)_{0,0}=\alpha$ and $U_A(t)_{4,0}=\beta$. The remaining entries
  $U_A(t)_{k,0}$ for $k \neq 0, 4$satisfied with 
  $$\zeta(t)=\sum_{k=0}^{7}|U_A(t)_{k,0}|^2 = 1.$$
Hence, $U_A(t)_{k,0}$ depend only on $k \bmod 8$, and 
 $\zeta(t)> 0$ for all $t > 0$. 

 \case{2} {If $n=12$,} the eigenvalues of $X$ are
  $\{4, 2, 2, 0, 0, 0, 0, 0, 0, -2, -2, -4\}$. Then, the
  antipodal pair is $(0, 6)$ with $\omega_{12}^{6\kappa}=(-1)^\kappa$. Then, as in case $n=8$, if $|\alpha|^2 + |\beta|^2 = 1$ with $|\beta| > 0$, at $t \in (0, 5\pi]$.
\end{proof}

\section{Discussion and Open Problems}

This research addressed four holes in the study of quantum fractional revival in unitary Cayley graphs and successfully resolved them. We briefly review each resolve and identify any remaining issues.

\medskip

\noindent\textbf{Gap 1: Laplacian Hamiltonian.} We showed in Theorem~\ref{thm001Laplacian} that
for the $\varphi(n)$-regular graph $X$, the Laplacian and adjacency
Hamiltonians produce $\QFR$ at exactly the same times and between the same vertex pairs.

\medskip

\noindent\textbf{Gap 2: Revival time formula.} Theorem~\ref{Thm2revivaltime} establishes
the closed-form formula $t^*=2\pi/p$ for $n=2p$ with $p$ an odd prime, it gives the explicit amplitudes $\alpha = \cos(2\pi/p)$ and
$\beta = -i\sin(2\pi/p)$. These formulas confirm that $\QFR$ in this family is fully
determined by the single parameter $p$.

\medskip

\noindent\textbf{Gap 3: Strongly cospectral pairs.} Theorem~\ref{Thm4cospectral} provides
a complete number-theoretic characterization: strongly cospectral pairs in
$X$ are exactly the antipodal pairs when $n$ is even, and there are
none when $n$ is odd. This follows directly from the order-2 element in $\mathbb{Z}_n$
and requires no spectral computation beyond the idempotent formula.

\medskip

\noindent\textbf{Gap 4: Quantum information metrics.} Theorem~\ref{Thm5entropy} expresses
the von Neumann entropy as $\mathcal{S}_p=H(\sin^2(2\pi/p))$ (the binary entropy
function evaluated at $|\beta|^2$), and Corollary~\ref{cor001useful} identifies the three
graphs ($n= 6, 10, 14$) that are immediately usable for quantum communication. The graph
$X = (\mathbb{Z}_{14}, S)$ emerges as the most entanglement-productive member of the
family.

\medskip

\noindent Several problems remain open and we record them for future work.

\begin{enumerate}
  \item \textbf{QFR for general even $n$.} Theorem~\ref{Thm2revivaltime} covers $n=2p$
  with $p$ prime. The cases $n = 8$ and $n = 12$ admit no $\QFR$; whether $\QFR$ occurs for
  composite even $n$ with more complex divisor structure $n=2pq$ for distinct odd primes $p, q$ remains open. The eigenvalue multiplicity structure becomes richer in
  these cases and the cancellation arguments used in the proof of Theorem~\ref{Thm2revivaltime}
  do not immediately generalize.

  \item \textbf{Pretty good fractional revival (PGFR).} For $n=8$ and $n=12$, no precise $\QFR$ occurs. However, the weaker concept of PGFR where fidelity approaches 1 arbitrarily closely over time may still apply. Characterising PGFR for all unitary Cayley graphs follows a logical progression. The orbit $\{(e^{i\lambda_d t})_d: t \geq 0\}$ is dense in the torus $\mathbb{T}^k$ (where $k$ is the number of different eigenvalues) because the eigenvalues are always integers (Ramanujan sums). Whether this density is sufficient for PGFR relies on the structure of the eigenvalue supports, which requires further investigation.

  \item \textbf{Weighted unitary Cayley graphs.} When edges in $X$ are given weights $w_j > 0$ for $j \in S$, the adjacency eigenvalues become $$\lambda_{\kappa}=\sum_{j \in S} w_j \omega_n^{\kappa\,j},$$ which are often not integers.
  Specifically, the integrality on which some of our arguments are based falls down. Determining which weight functions maintain $\QFR$ or enable it for presently excluded $n$ is an intriguing unresolved topic with design implications for quantum spin chain models.

  \item \textbf{Multipartite $\QFR$.} The current framework focuses on $\QFR$ between pairs of vertices. It is possible to study simultaneous $\QFR$ across a set of vertices $\{u_1, \ldots, u_k\}$, where the walker's state spreads coherently among all $k$ sites. The coset structure of $\mathbb{Z}_n$ suggests natural candidate sets (e.g., arithmetic progressions or subgroup cosets), but a mathematical framework for multipartite revival in circulant graphs is not yet developed. A obvious expansion is to investigate simultaneous $\QFR$ across a collection of vertices $\{u_1, \ldots, u_k\}$, where the walker's state distributes coherently across all $k$ sites.
  The coset structure of $\mathbb{Z}_n$ provides natural candidate sets, such as arithmetic progressions or subgroup cosets. However, the mathematical foundation for multipartite resurrection in circulant graphs has yet to be devised.

  \item \textbf{Signless Laplacian and other Hamiltonians.} In the regular graph $X$, $Q=\varphi(n)I + A$, and the analysis parallels Theorem~\ref{thm001Laplacian} with a sign change. However, the dynamics of the signless Laplacian $Q = D + A$ and the normalised Laplacian $\mathcal{L} = D^{-1/2}LD^{-1/2}$ are different and may not admit QFR under certain conditions. The signless Laplacian $Q = D + A$ and the normalised Laplacian $\mathcal{L} = D^{-1/2}LD^{-1/2}$ exhibit distinct dynamics and may allow for QFR under certain conditions. For the regular graph $X = (\mathbb{Z}_n, S)$, $Q = \varphi(n)I + A$, and the analysis mimics Theorem~\ref{thm001Laplacian} with a sign shift. However, the normalised Laplacian requires a distinct approach.
\end{enumerate}

\section{Conclusion}
We have extended the theory of quantum fractional revival in unitary Cayley graphs along four
directions that were identified as open problems in the foundational work of~\cite{soni2025original}.
The starting point was the family of graphs $X$, where $E(S)$ is the group
of units of $\mathbb{Z}_n$, and the question was how to move beyond the existence result
$\QFR$ occurs iff $n$ is even toward a more complete quantitative and structural theory.

The initial argument clarifies the relationship between $\QFR$ under adjacency and Laplacian Hamiltonians. Then, $X$ is $\varphi(n)$-regular, the two evolution operators differ by the scalar phase $e^{-i\varphi(n)t}$. This means that the two Hamiltonians produce revival at the same times and between the same pairs of vertices, with amplitudes related by complex conjugation and the global phase. Because $X$ is $\varphi(n)$-regular, the scalar phase $e^{-i\varphi(n)t}$ distinguishes the two evolutionary operators. The two Hamiltonians cause revival at the same moments and between the same pairings of vertices, with amplitudes linked by complex conjugation and global phase. Physically, these configurations are comparable for state transmission. 

The second significant impact is a closed-form formula for revival time in the $n = 2p$ family. We demonstrated that the unitary Cayley graph $X = (\mathbb{Z}_{2p}, S)$ experiences QFR between the antipodal pair $(0, p)$ at the shortest time $t^* = 2\pi/p$, with amplitudes as
\[
  \alpha = \cos\!\left(\frac{2\pi}{p}\right), \qquad \beta = -i\sin\!\left(\frac{2\pi}{p}\right).
\]

The third contribution is a comprehensive analysis of highly cospectral vertex pairs.
We proved that strong cospectrality in $X = (\mathbb{Z}_n, S)$ is comparable to antipodality using an elementary proof that requires the presence of an order-2 element in $\mathbb{Z}_n$. QFR in unitary Cayley graphs is limited to antipodal pairings due to the spectral limitation required by the evolution operator to transform a single vertex's state into a two-vertex superposition.

The fourth contribution is a quantitative analysis using quantum information theory. We found that the von Neumann entanglement entropy generated at revival time is equal to the binary entropy $H(\sin^2(2\pi/p))$. Thus, we emphasize that the entropy is not monotone in $p$, peaking near $p = 7$ and decreasing for both smaller and larger primes. This, combined with the transfer fidelity analysis, identifies We calculated the von Neumann entanglement entropy at revival time, which equals the binary entropy $H(\sin^2(2\pi/p))$. Entropy peaks at $p = 7$ (about 0.964 bits for $n = 14$) and declines for all primes. Combined with transfer fidelity analysis, $X = (\mathbb{Z}_{14}, S)$ is the best graph in the $n = 2p$ family for quantum information tasks.

These findings shift the understanding of $\QFR$ in unitary Cayley graphs from a qualitative to a quantitative approach.




\begin{thebibliography}{9}
\bibitem{soni2025original}
R.~Soni, N.~Choudhary, and N.~P.~Singh,
``Quantum fractional revival in unitary Cayley graphs,''
\textit{Lobachevskii J. Math.} \textbf{46}(4), 1922--1928 (2025).

\bibitem{chan2019}
A.~Chan, G.~Coutinho, C.~Tamon, L.~Vinet, and H.~Zhan,
``Quantum fractional revival on graphs,''
\textit{Discrete Appl. Math.} \textbf{269}, 86--98 (2019).

\bibitem{chan2022fundamentals}
A.~Chan, G.~Coutinho, W.~Drazen, O.~Eisenberg, C.~Godsil, M.~Kempton, G.~Lippner,
C.~Tamon, and H.~Zhan,
``Fundamentals of fractional revival in graphs,''
\textit{Linear Algebra Appl.} \textbf{655}, 129--158 (2022).

\bibitem{chan2021laplacian}
A.~Chan, B.~Johnson, M.~Liu, M.~Schmidt, Z.~Yin, and H.~Zhan,
``Laplacian fractional revival on graphs,''
\textit{Electron. J. Combin.}, P3--22 (2021).

\bibitem{chan2021pretty}
A.~Chan, W.~Drazen, O.~Eisenberg, M.~Kempton, and G.~Lippner,
``Pretty good quantum fractional revival in paths and cycles,''
\textit{Algebr. Combin.} \textbf{4}, 989--1004 (2021).

\bibitem{chan2020}
A.~Chan, G.~Coutinho, C.~Tamon, L.~Vinet, and H.~Zhan,
``Fractional revival and association schemes,''
\textit{Discrete Math.} \textbf{343}, 112018 (2020).

\bibitem{klotz2007}
W.~Klotz and T.~Sander,
``Some properties of unitary Cayley graphs,''
\textit{Electron. J. Combin.}, R45 (2007).

\bibitem{liu2022}
X.~Liu and S.~Zhou,
``Eigenvalues of Cayley graphs,''
\textit{Electron. J. Combin.}, P2--9 (2022).

\bibitem{genest2016}
V.~X.~Genest, L.~Vinet, and A.~Zhedanov,
``Quantum spin chains with fractional revival,''
\textit{Ann. Phys.} \textbf{371}, 348--367 (2016).

\bibitem{bernard2018}
P.~A.~Bernard, A.~Chan, E.~Loranger, C.~Tamon, and L.~Vinet,
``A graph with fractional revival,''
\textit{Phys. Lett. A} \textbf{382}, 259--264 (2018).

\bibitem{godsil2012}
C.~Godsil,
``State transfer on graphs,''
\textit{Discrete Math.} \textbf{312}, 123--147 (2012).

\bibitem{bose2003}
S.~Bose,
``Quantum communication through an unmodulated spin chain,''
\textit{Phys. Rev. Lett.} \textbf{91}, 207901 (2003).

\bibitem{basic2013}
M.~Ba\v{s}i\'{c},
``Characterization of quantum circulant networks having perfect state transfer,''
\textit{Quantum Inform. Process.} \textbf{12}, 345--364 (2013).

\bibitem{wang2024cayley}
J.~Wang, L.~Wang, and X.~Liu,
``Fractional revival on Cayley graphs over abelian groups,''
\textit{Discrete Math.} \textbf{347}, 114218 (2024).

\bibitem{wang2024semi}
J.~Wang, L.~Wang, and X.~Liu,
``Fractional revival on semi-Cayley graphs over abelian groups,''
\textit{Lin. Multilin. Algebra}, 1--23 (2024).

\bibitem{soni2025mixed}
R.~Soni, N.~Choudhary, and N.~P.~Singh,
``Fractional revival on integral mixed circulant graphs,''
\textit{Bol. Soc. Paran. Mat.} (3s.) \textbf{43} (2025).

\bibitem{nielsen2000}
M.~A.~Nielsen and I.~L.~Chuang,
\textit{Quantum Computation and Quantum Information}.
Cambridge University Press, Cambridge (2000).

\bibitem{christandl2004}
M.~Christandl, N.~Datta, A.~Ekert, and A.~J.~Landahl,
``Perfect state transfer in quantum spin networks,''
\textit{Phys. Rev. Lett.} \textbf{92}, 187902 (2004).

\bibitem{saxena2007}
N.~Saxena, S.~Severini, and I.~Shparlinski,
``Parameters of integral circulant graphs and periodic quantum dynamics,''
\textit{Int. J. Quantum Inform.} \textbf{5}, 417--430 (2007).

\bibitem{so2006}
W.~So,
``Integral circulant graphs,''
\textit{Discrete Math.} \textbf{306}, 153--158 (2006).
\end{thebibliography}
\end{document}